%
%

\documentclass{amsart}
\usepackage{amsfonts,amssymb,amsmath,amsthm}
\usepackage{url}
\usepackage{enumerate}

\usepackage{graphicx}

\urlstyle{sf}
\newtheorem{theorem}{Theorem}[section]

\newtheorem{lemma}[theorem]{Lemma}
\newtheorem{claim}[theorem]{Claim}
\newtheorem{subclaim}{Subclaim}

\theoremstyle{definition}
\newtheorem{remark}[theorem]{Remark}

\begin{document}

\title[Toroidal Seifert fibered surgeries on Montesinos knots]
{Toroidal Seifert fibered surgeries\\ on Montesinos knots}

\dedicatory{Dedicated to Professor Kunio Murasugi on the occasion of his 80th birthday}

\author{Kazuhiro Ichihara}
\address{Department of Mathematics, College of Humanities and Sciences, Nihon University, 
3-25-40 Sakurajosui, Setagaya-ku, Tokyo 156-8550, Japan}
\email{ichihara@math.chs.nihon-u.ac.jp}
\urladdr{http://www.math.chs.nihon-u.ac.jp/~ichihara/index.html}
\thanks{The first author is partially supported by
Grant-in-Aid for Young Scientists (B), No.~20740039,
Ministry of Education, Culture, Sports, Science and Technology, Japan.}

\author{In Dae Jong}
\address{Osaka City University Advanced Mathematical Institute, 3-3-138, Sugimoto, Sumiyoshi-ku, Osaka, 558-8585, Japan}
\email{jong@sci.osaka-cu.ac.jp}
\urladdr{http://jong.web.fc2.com/index.html}
\thanks{The second author is partially supported by Grant-in-Aid for Research Activity Start-up, No.~22840037, Japan Society for the Promotion of Science.}

\keywords{Montesinos knot; toroidal surgery; Seifert fibered surgery}
\subjclass[2000]{Primary 57M50; Secondary 57M25}

\begin{abstract}
We show that if a Montesinos knot admits 
a Dehn surgery yielding a toroidal Seifert fibered $3$-manifold, 
then the knot is the trefoil knot and the surgery slope is $0$. 
\end{abstract}

\date{\today}

\maketitle

\section{Introduction}

Let $K$ be a knot in the $3$-sphere $S^3$ and $E(K)$ the exterior of $K$. 
For a slope $\gamma$ 
on the boundary of $E(K)$, 
we denote by $K(\gamma)$ 
the manifold obtained by 
the \textit{Dehn surgery on $K$ along a slope $\gamma$}, 
i.e., $K(\gamma)$ is obtained by 
gluing a solid torus $V$ to $E(K)$ 
so that a simple closed curve representing 
$\gamma$ bounds a disk in $V$. 
We call such a slope $\gamma$ the {\it surgery slope}. 
It is well-known that a slope on the boundary torus $\partial E(K)$ 
is parameterized by an element of $\mathbb{Q} \cup \{ 1/0 \}$ 
by using the standard meridian-longitude system for $K$. 
Thus, when a slope $\gamma$ corresponds to $r \in \mathbb{Q} \cup \{1/0\}$, 
we call the Dehn surgery along $\gamma$ 
the \textit{$r$-surgery} for brevity, and 
denote $K(\gamma)$ by $K(r)$.
Since $K(1/0)$ is homeomorphic to $S^3$ again, 
$1/0$-surgery is called the \textit{trivial} surgery. 
See \cite{KawauchiBook}, \cite{RolfsenBook} for basic references.

The Hyperbolic Dehn Surgery Theorem, 
established by Thurston~\cite[Theorem 5.8.2]{Thurston1978}, 
says that each hyperbolic knot admits 
only finitely many Dehn surgeries yielding non-hyperbolic manifolds. 
Here a knot is called \textit{hyperbolic} 
if its complement admits a complete hyperbolic structure of finite volume. 
Thereby a Dehn surgery on a hyperbolic knot 
is called \textit{exceptional} if it yields a non-hyperbolic manifold. 
As a consequence of the Geometrization Conjecture, raised by Thurston~\cite[section 6, question 1]{Thurston1982}, and established by Perelman's works~\cite{Perelman2002, Perelman2003, Perelman2003a}, exceptional surgeries are classified into the three types: 
a Seifert fibered surgery, a toroidal surgery, and a reducible surgery. 
Here a Dehn surgery is called \textit{Seifert fibered} / \textit{toroidal} / \textit{reducible} 
if it yields a Seifert fibered / toroidal / reducible manifold. 
We refer the reader to \cite{Boyer2002} for a survey. 

We here note that the classification is not exclusive, 
that is, there exist Seifert fibered 3-manifolds which are both toroidal and reducible. 
However, 
hyperbolic knots in $S^3$ are conjectured to admit no reducible surgeries. 
This is the well-known, but still open, Cabling Conjecture \cite{Gonzalez-AcunaShort1986}.
Thus we consider in this paper a Dehn surgery on a knot in $S^3$ 
yielding a $3$-manifold which is toroidal and Seifert fibered, 
which we call a \textit{toroidal Seifert fibered surgery}. 

Actually there exist infinitely many hyperbolic knots in $S^3$ 
each of which admits a toroidal Seifert fibered surgery. 
These were found by Eudave-Mu\~{n}oz \cite[Proposition 4.5(1) and (3)]{Eudave-Munoz2002}, 
and Gordon and Luecke \cite{GordonLuecke1999a} independently. 
On the other hand, Motegi~\cite{Motegi2003} studied 
toroidal Seifert fibered surgeries on symmetric knots, 
and gave several restrictions on the existence of such surgeries. 
In particular, he showed that 
only the trefoil knot admits a toroidal Seifert fibered surgery among two-bridge knots \cite[Corollary 1.6]{Motegi2003}. 

Extending this result, in this paper, we show the following: 

\begin{theorem}\label{thm:main}
Let $K$ be a Montesinos knot in the $3$-sphere. 
If $K$ admits a toroidal Seifert fibered surgery, 
then $K$ is the trefoil knot and the surgery slope is $0$. 
\end{theorem}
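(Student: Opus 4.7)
The plan is to combine the known classification of toroidal Dehn surgeries on Montesinos knots with a direct analysis of the JSJ decomposition of the resulting toroidal manifolds.

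First, I would split on the length of the Montesinos presentation. If $K$ has length at most two, then $K$ is a two-bridge knot and Motegi's Corollary~1.6 cited in the introduction gives the conclusion immediately. So we may assume $K$ has length at least three. Such a $K$ is either hyperbolic (by Oertel's theorem) or a torus knot, and torus knots admit no toroidal surgeries (by Moser's classification), so we may further assume $K$ is hyperbolic. Then the Hyperbolic Dehn Surgery Theorem implies $r$ is one of finitely many exceptional slopes, and standard toroidal-slope constraints due to Gordon--Luecke further force $r$ to be integral or half-integral.

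Next, I would invoke the classification of toroidal surgeries on hyperbolic Montesinos knots of length at least three, in particular the work of Wu, which yields an explicit short list of candidate pairs $(K,r)$ with $K(r)$ toroidal, together with a description of an essential torus $T$ in $K(r)$. For each candidate I would then examine the JSJ decomposition of $K(r)$ obtained from $T$: a toroidal Seifert fibered 3-manifold has JSJ pieces which are themselves Seifert fibered, and the regular fiber slopes must agree along every gluing torus. Hence for each candidate I would identify the two JSJ pieces (standard objects such as cable spaces, twisted $I$-bundles over the Klein bottle, or Seifert fibered pieces of small Montesinos knot exteriors), compute their Seifert invariants, and read off the regular fiber slopes on $T$ from both sides. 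The theorem then follows by verifying that in every case either some piece is hyperbolic, or both pieces are Seifert fibered but the two fiber slopes on $T$ disagree, so the local fibrations fail to assemble into a global one.

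The main obstacle is this last case-by-case verification. Although each individual case reduces to a concrete arithmetic comparison of slopes, accumulating the computations over Wu's entire list requires careful tracking of the meridian--longitude framing of $K$ through the tangle decomposition of the Montesinos diagram and through the surgery, together with consistent bookkeeping of Seifert invariants of the pieces. No new topological ideas beyond those already mentioned should be required; the labor is in organizing the cases and carrying out the slope matching in each.
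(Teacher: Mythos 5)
Your first half tracks the paper's actual argument closely: reduce by length using Wu's non-existence result for length $\ge 4$, quote Motegi for the two-bridge case, dispose of the non-hyperbolic length-three knots via Oertel and Moser, and then invoke Wu's classification of toroidal surgeries on hyperbolic length-three Montesinos knots. But you omit the step that makes the rest feasible: because a length-three Montesinos knot exterior contains no closed essential surface, a Seifert fibered surgery slope must be a boundary slope, and by the result of Ichihara--Motegi--Song this forces $K$ to be fibered and $r=0$. This is what cuts Wu's list down to just two families (after discarding the genus-one non-fibered pretzel knots and using monicity of the Alexander polynomial), namely $P(2m,2n+1,-2n-1)$ and $M(-1/2,1/3,1/(6\pm 1/2))$, and what delivers the ``slope is $0$'' part of the conclusion. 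Without it you are left checking every slope in Wu's list, and you have no mechanism in your outline that produces $r=0$.

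The more serious gap is in your proposed endgame. You assert that each candidate splits along the essential torus into ``standard objects'' whose Seifert invariants can be computed and whose fiber slopes can be compared arithmetically. That is not what happens. For $P(2m,2n+1,-2n-1)$ with $n\ge 2$, one piece is a twisted $I$-bundle over a Klein bottle, but the other piece is the exterior of a certain knot $J_{m,n}$ in $S^2\times S^1$, and the entire difficulty is to prove that this piece is \emph{not Seifert fibered at all} --- there are no Seifert invariants to compare. The paper does this with a braid exponent-sum argument modulo $2(l-1)$ in the spherical braid group for $m=\pm1$, and for $|m|\ge 2$ by proving an augmented link $L_n$ is hyperbolic (a multi-page incompressibility/atoroidality analysis) and then applying the Gordon--Wu distance bound on annular Dehn fillings. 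Similarly, $M(-1/2,1/3,1/(6+1/2))(0)$ is excluded by showing the monodromy of the genus-two fibration is not periodic (no root of unity is a zero of the Alexander polynomial), and $M(-1/2,1/3,1/(6-1/2))(0)$ by a Kirby-calculus identification of the monodromy followed by another spherical-braid exponent-sum computation. Your plan acknowledges the alternative ``some piece is hyperbolic'' but offers no method to establish it; that is precisely where the substance of the proof lies, so as written the proposal does not close.
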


A \textit{Montesinos knot} of type $(R_1, \dots, R_l)$, 
denoted by $M(R_1, \dots, R_l)$, 
is defined as 
a knot admitting a diagram obtained by 
putting rational tangles $R_1, \dots, R_l$ 
together in a circle 
(see Figure~\ref{fig:DefMonte}). 
For brevity, here and in the sequel, 
we abuse $R_i$ to denote 
a rational number (or, an irreducible fraction) 
or the corresponding rational tangle depending on the context. 
The minimal number of such rational tangles is called 
the \textit{length} of a Montesinos knot. 
In particular, a Montesinos knot $K$ is called 
a \textit{pretzel knot} of type $(a_1, \dots,a_l)$, 
denoted by $P(a_1, \dots, a_l)$, 
if the rational tangles in $K$ are of the form $1/a_1, \dots, 1/a_l$. 

\begin{figure}[htb]	
\includegraphics[width=0.3\textwidth]{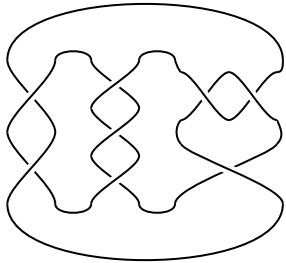}
\caption{$M(1/2 , 1/3, -2/3)$}\label{fig:DefMonte}
\end{figure}

Exceptional surgeries on Montesinos knots are extensively studied. 
See \cite{IchiharaJong2009}, \cite{Wu1996}, \cite{Wu1998a}, \cite{Wu2006} for example.

On toroidal Seifert fibered surgeries, 
in our forthcoming paper \cite{IchiharaJongMizushima2010}, we will also show that 
among prime alternating knots, only the trefoil knot admits a toroidal Seifert fibered surgery.

\section{Proof of Theorem~\ref{thm:main}}

To prove Theorem~\ref{thm:main}, 
we will show the following two lemmas in Sections 3 and 4 respectively. 

\setcounter{theorem}{0}
\setcounter{section}{3}
\begin{lemma}\label{lem:pretzel}
For the pretzel knot $K_{m,n}=P(2m, 2n + 1, -2n-1)$ 
with integers $m \ne 0$ and $n \ge 1$, 
the surgered manifold $K_{m,n}(0)$ is not Seifert fibered. 
\end{lemma}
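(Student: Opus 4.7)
The plan is to find an essential torus in $K_{m,n}(0)$ coming from a Conway sphere in the pretzel diagram, to describe explicitly the torus decomposition of $K_{m,n}(0)$ along that torus, and then to rule out any global Seifert fibration by showing the fibrations on the two pieces cannot be glued coherently.

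First, I would consider the Conway sphere $S$ in the pretzel diagram of $K_{m,n} = P(2m, 2n+1, -(2n+1))$ that separates the $2m$-tangle from the sub-tangle formed by the $(2n+1)$- and $-(2n+1)$-tangles. The hypotheses $m \ne 0$ and $n \ge 1$ guarantee that both sub-tangles are nontrivial rational tangles; in particular neither is an integral tangle that would allow $S$ to be compressed away in $E(K_{m,n})$. The $0$-surgery transforms $S$ (which meets $K_{m,n}$ in four points) into a closed surface $T$ in $K_{m,n}(0)$ by tubing the four meridional boundary circles along annuli in the attached solid torus via the $0$-framed longitude. I would verify that $T$ is a torus and that it is incompressible in $K_{m,n}(0)$, using the standard criterion that essential Conway spheres in knot exteriors give rise to essential tori under appropriate surgery slopes.

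Next, I would cut $K_{m,n}(0)$ along $T$ to obtain two compact orientable $3$-manifolds $M_1$ and $M_2$, each with a single torus boundary. The piece $M_1$ is determined by the ball containing the $2m$-tangle, together with the induced surgery framing, and the piece $M_2$ by the ball containing the $(2n+1, -(2n+1))$ sub-tangle. Each $M_i$ can be identified as a Seifert fibered space over the disk with two exceptional fibers, and I would compute its Seifert invariants and, in particular, the slope $\sigma_i$ on $T$ of its regular Seifert fiber, using standard Montesinos-tangle techniques together with the $0$-framing.

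Finally, suppose for contradiction that $K_{m,n}(0)$ admits a Seifert fibration. Then the incompressible torus $T$ must be either vertical or horizontal with respect to the fibration. In the vertical case, the Seifert fibrations of $M_1$ and $M_2$ must induce the same fiber slope on $T$, i.e.\ $\sigma_1 = \sigma_2$. In the horizontal case, each $M_i$ must be a surface semi-bundle with fibered boundary, a highly restrictive structure incompatible with the Seifert fibered pieces identified above. I would conclude the proof by showing that $\sigma_1 \ne \sigma_2$ for every choice of integers $m \ne 0$ and $n \ge 1$, and by ruling out the horizontal case from the explicit Seifert structures on $M_1$ and $M_2$. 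The main obstacle is the slope computation in the vertical case: it requires choosing coordinates on $T$ adapted simultaneously to the Conway-sphere tubing and to the $0$-framing, and tracking how the even twist parameter $2m$ and the odd twist parameter $2n+1$ interact with these coordinates. I expect the parity contrast between the three tangles to be exactly what forces $\sigma_1 \ne \sigma_2$, mirroring the phenomenon behind Motegi's two-bridge result cited in the introduction.
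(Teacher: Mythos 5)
There is a genuine gap at the very first step, and it propagates through the whole plan. The Conway sphere $S$ you propose, separating the $2m$-tangle from the other two tangles, is not essential in $E(K_{m,n})$: the side containing only the $2m$-tangle is a rational tangle, so $S$ is $\partial$-compressible there. This is consistent with Oertel's theorem (cited in the paper) that length-three Montesinos knot exteriors contain no closed essential surfaces, and with the fact that the double branched cover of a length-three Montesinos knot is a small Seifert fibered space; no essential Conway sphere is available. Moreover, even granting the sphere, the surface you build from it in $K_{m,n}(0)$ is not a torus: $S$ meets the knot in four points, the four boundary circles of $S\cap E(K_{m,n})$ are meridians, which do not bound disks in the $0$-surgery solid torus, so tubing them in pairs along annuli produces a closed surface of Euler characteristic $-2$, i.e.\ genus two. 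The ``standard criterion'' you invoke converts essential Conway spheres into essential tori in the \emph{double branched cover}, not in a Dehn surgery on the knot itself; these two constructions are being conflated.

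The actual essential torus in $K_{m,n}(0)$ arises differently: the knot bounds a once-punctured Klein bottle $F$ at slope $0$, $F$ caps off to a Klein bottle $\widehat F$ in $K_{m,n}(0)$, and the essential torus is $\partial N(\widehat F)$. One complementary piece is the twisted $I$-bundle over the Klein bottle (which \emph{is} Seifert fibered, over the disk with two exceptional fibers or over the M\"obius band), but the other piece is not a Seifert piece over a disk at all -- the paper identifies it, via the quotient by a strong inversion and a tangle computation, with the exterior of a closed braid $J_{m,n}$ in $S^2\times S^1$, and the bulk of the proof consists of showing that this exterior is not Seifert fibered (using exponent sums in the spherical braid group for $|m|=1$, and the Gordon--Wu bound on the distance between annular/Seifert Dehn fillings of a hyperbolic link for $|m|\ge 2$). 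So your ``match the fiber slopes on the two Seifert pieces'' endgame has no chance for $n\ge 2$, because one of the two pieces is generically hyperbolic; a fibration-matching argument of the kind you describe is used in the paper only for the case $n=1$, and there it is run on the double branched cover description coming from the Montesinos trick, not on a Conway-sphere decomposition of $K_{m,n}(0)$ itself.
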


\setcounter{theorem}{0}
\setcounter{section}{4}
\begin{lemma}\label{lem:non-pretzel}
For the Montesinos knot $K_\pm=M(-1/2, 1/3, 1/(6 \pm 1/2) )$, 
the surgered manifold $K_\pm(0)$ is not Seifert fibered. 
\end{lemma}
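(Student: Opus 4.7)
The plan is to argue by contradiction. Suppose $K_\pm(0)$ were Seifert fibered. Since $H_1(K_\pm(0);\mathbb{Z})\cong\mathbb{Z}$ is torsion-free of rank one, the standard $H_1$-presentation of a Seifert fibered space forces the base $2$-orbifold to have underlying surface $S^2$ (with cone points) and the Euler number of the fibration to vanish: any higher-genus base would give first Betti number at least two, and any nonzero Euler number would make $H_1$ finite. A Seifert fibered $3$-manifold over $S^2$ with $e=0$ admits a horizontal embedded closed surface and so fibers over $S^1$. Because $H^1(K_\pm(0);\mathbb{Z})\cong\mathbb{Z}$ has a unique primitive class up to sign, the fiber must be isotopic to the capped-off minimal-genus Seifert surface of $K_\pm$; restricting the fibration to $E(K_\pm)$ then exhibits $K_\pm$ as a fibered knot.

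It thus suffices to show that neither $K_+=M(-1/2,\,1/3,\,2/13)$ nor $K_-=M(-1/2,\,1/3,\,2/11)$ is fibered. I would do this by computing the Alexander polynomial $\Delta_{K_\pm}(t)$ directly from a Seifert matrix of the standard Seifert surface of the Montesinos knot (obtained via Seifert's algorithm applied to the Montesinos diagram, or equivalently by the Hirasawa--Murasugi construction) and showing that its leading coefficient is not $\pm 1$; this contradicts the monicity of the Alexander polynomial of any fibered knot. The structural reason to expect success is that the third rational tangle of $K_\pm$ has numerator $|\beta_3|=2$: for Montesinos knots $M(\beta_1/\alpha_1,\beta_2/\alpha_2,\beta_3/\alpha_3)$ with $\alpha_i\ge 2$, a tangle with numerator $|\beta_i|>1$ typically contributes a factor of $\beta_i$ to the leading coefficient, so we expect an even leading coefficient here.

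The main obstacle is the Alexander polynomial calculation itself: although in principle routine, these Montesinos knots have Seifert matrices of nontrivial size (the third tangle has continued-fraction length two, contributing additional generators), and careful tracking of signs and orientations is the principal bookkeeping challenge. The rest of the argument uses only standard facts: the first-homology analysis of Seifert fibered spaces, the existence of horizontal surfaces when $e=0$, and the correspondence between fiberings of $E(K)$ and of $K(0)$ in the Seifert-surface class (a consequence of Gabai's foliation theorems).
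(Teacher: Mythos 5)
Your reduction to ``$K_\pm$ is not fibered'' cannot be completed, because $K_+$ and $K_-$ \emph{are} fibered knots. The Alexander polynomial of $M(-1/2,1/3,1/(6+1/2s))$ is $-s(t^2+t^{-2})+2s+1$, and $K_\pm$ correspond to $s=\pm1$; these are exactly the two members of the family with monic Alexander polynomial ($\Delta_{K_+}=-(t^2+t^{-2})+3$ and $\Delta_{K_-}=(t^2+t^{-2})-1$), which is precisely why they survive the screening in Claim~\ref{clm:cand} and appear in Lemma~\ref{lem:non-pretzel} at all. Moreover, by Gabai's sutured-manifold criteria (Remark~\ref{rem:fibered}) these knots are genuinely fibered, not merely Alexander-polynomial-monic. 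So your heuristic that the numerator $2$ of the third tangle should force an even leading coefficient is simply false here, and the contradiction you are aiming for does not exist. (Your first paragraph --- Seifert fibered plus $H_1\cong\mathbb{Z}$ forces the $0$-surgery to fiber over $S^1$ with fiber the capped-off Seifert surface, hence $K$ fibered --- is fine and is essentially the cited Proposition~1 of Ichihara--Motegi--Song already used in Claim~\ref{clm:cand}; it is the next step that fails.)

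The paper has to exploit fiberedness in the opposite direction: $K_\pm(0)$ is a closed genus-two surface bundle, and if it were Seifert fibered its monodromy would have to be periodic. For $K_+$ this is excluded because the characteristic polynomial of the action on $H_1$ of the fiber equals $\Delta_{K_+}$, which has no roots of unity among its zeros. For $K_-$ that homological test is inconclusive ($\Delta_{K_-}$ is, up to units, the twelfth cyclotomic polynomial $t^4-t^2+1$), so the paper instead writes $K_-(0)$ as the double branched cover of a closed $6$-string braid in $S^2\times S^1$ and rules out periodicity by an exponent-sum computation modulo $10$ in $\mathbf{B}_6(S^2)$. Any correct proof of Lemma~\ref{lem:non-pretzel} must engage with the monodromy (or some equivalent structure) in this way; a non-fiberedness obstruction is not available.
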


\setcounter{section}{2}
\setcounter{theorem}{0}

Assuming these lemmas, we prove Theorem~\ref{thm:main}.

\begin{proof}[Proof of Theorem~\ref{thm:main}]
Let $K$ be a Montesinos knot admitting a toroidal Seifert fibered surgery. 
That is, we suppose that $K(r)$ is a toroidal Seifert fibered manifold for some $r \in \mathbb{Q}$. 
Then the length of $K$ must be less than four 
since Montesinos knots of length at least four admit no exceptional surgeries \cite{Wu1996}. 
On the other hand, 
if the length of $K$ is less than three, meaning that $K$ is a two-bridge knot, 
then $K$ is the trefoil knot and $r=0$, i.e., the surgery slope is longitudinal~\cite[Corollary 1.6]{Motegi2003}. 

Thus, in the following, we suppose that $K$ is of length three. 
In addition, if $K$ is non-hyperbolic, then $K$ is either $P(-2,3,3)$ or $P(-2,3,5)$, 
which are actually the torus knots of type $(3,4)$ or of type $(3,5)$. 
This was originally shown by Oertel~\cite[Corollary 5]{Oertel1984} 
together with the result in the unpublished monograph by Bonahon and Siebenmann~\cite{BonahonSiebenmann1979-85, BonahonSiebenmann2010}. 
Dehn surgeries on torus knots are completely classified~\cite{Moser1971}, and 
the two torus knots admit no toroidal Seifert fibered surgeries. 
Thus, from now on, we further assume that $K$ is hyperbolic. 
Then we have the following: 

\begin{claim}\label{clm:cand}
The knot $K$ must be one of the following: 
\begin{enumerate}
\item[{\rm (I)}] $P(2m, 2n + 1, -2n-1)$ for integers $m \ne 0$ and $n \ge 1$.
\item[{\rm (II)}] $M(-1/2, 1/3, 1/(6 \pm 1/2) )$. 
\end{enumerate}
Furthermore the slope $r$ must be $0$. 
\end{claim}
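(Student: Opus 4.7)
The plan is to establish the claim by intersecting two existing classifications. Since $K$ is hyperbolic, of length exactly three, and $K(r)$ is both toroidal and Seifert fibered, the pair $(K,r)$ must simultaneously appear on Wu's list of toroidal surgeries on hyperbolic Montesinos knots \cite{Wu1996, Wu1998a, Wu2006} and on the authors' earlier list of Seifert fibered surgeries on hyperbolic Montesinos knots of length three \cite{IchiharaJong2009}. Both lists are finite and explicit, so the proof reduces to computing their intersection.

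First I would invoke Wu's classification: for a hyperbolic Montesinos knot of length three, any toroidal surgery slope is an integer, and the admissible pairs $(K,r)$ come from a short list of explicit constructions of essential tori (typically obtained by tubing along essential surfaces detected by the Montesinos tangle decomposition). Next I would invoke the Ichihara--Jong classification of Seifert fibered surgeries, which is likewise given by explicit parameter constraints on the rational tangles $R_1, R_2, R_3$ together with the slope $r$.

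The final step is to cross-check the two lists. The pretzel family (I), namely $P(2m, 2n+1, -2n-1)$, survives because the cancellation $R_2+R_3 = 0$ produces both an essential Conway sphere in the exterior (yielding a torus in $K(0)$, hence toroidality at slope $0$) and matches the Seifert fibered parameter constraints. The two sporadic knots in (II), $M(-1/2, 1/3, 1/(6 \pm 1/2))$, are the only other matching entries, and in each case the common slope forced by both classifications is $r=0$. The main obstacle I anticipate is the notational bookkeeping: Wu's and Ichihara--Jong's classifications use different conventions for writing the Montesinos parameters and for orientations, so a careful translation between them is needed before one can check that no further $(K,r)$ pair survives the intersection. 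Once that translation is done, the match between the two lists is essentially mechanical, and the equality $r=0$ is forced in every surviving case.
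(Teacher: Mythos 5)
Your overall shape --- start from Wu's classification of toroidal surgeries on hyperbolic length-three Montesinos knots and cut that list down using the Seifert fibered condition --- matches the paper's first move, but the second list you propose to intersect with does not exist. The reference \cite{IchiharaJong2009} classifies \emph{cyclic and finite} surgeries, i.e.\ surgeries yielding manifolds with finite fundamental group; a toroidal Seifert fibered manifold contains an incompressible torus and hence has infinite fundamental group, so intersecting Wu's list with that one would (wrongly) return the empty set, contradicting the fact that the trefoil does admit such a surgery. There is no complete classification of Seifert fibered surgeries on length-three Montesinos knots available to quote; obtaining restrictions on such surgeries is precisely what this paper is doing.

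The actual mechanism has two ingredients that your proposal is missing. First, since the exterior of a length-three Montesinos knot contains no closed essential surface \cite[Corollary 4]{Oertel1984}, toroidality of $K(r)$ forces $r$ to be a boundary slope of an essential surface; combined with the Seifert fibered hypothesis, \cite[Proposition 1]{IchiharaMotegiSong2008} then gives that $K$ is \emph{fibered} and $r=0$. This is where the conclusion $r=0$ comes from --- it is not read off by matching two parameter lists. Second, the filtering of Wu's list of $0$-surgeries is done by fiberedness rather than by translating conventions: the three $0$-surgery families in Wu's list are $P(2p_1-1,2p_2-1,2p_3-1)$, $P(2m,2n+1,-2n-1)$, and $M(-1/2,1/3,1/(6+1/2s))$; the first is excluded because those knots have genus one and the only genus-one fibered knots are the trefoil and the figure-eight \cite{Gonzalez-Acuna1970}, and the third is cut down to $s=\pm1$ because its Alexander polynomial $-s(t^2+t^{-2})+2s+1$ must be monic for a fibered knot. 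Without these steps your argument has no way to establish $r=0$ or to eliminate the extra families from Wu's list.
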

\begin{proof}
Since the exterior of a Montesinos knot of length three contains no closed essential surface~\cite[Corollary 4]{Oertel1984}, 
if $K(r)$ is toroidal, 
then $r$ must be a boundary slope of an essential surface. 
Furthermore, if $K(r)$ is Seifert fibered, 
then by \cite[Proposition 1]{IchiharaMotegiSong2008}, 
it follows that $K$ is fibered and $r=0$. 

On the other hand, 
toroidal surgeries on hyperbolic Montesinos knots of length three are 
completely listed by Wu~\cite{Wu2006}. 
Thus, the candidates for toroidal Seifert fibered surgeries on Montesinos knots 
are $0$-surgeries on fibered knots contained in Wu's list. 
The $0$-surgeries contained in Wu's list are the following: 
\begin{enumerate}
\item[(i)] The $0$-surgery on $P(2p_1 -1, 2p_2 -1, 2p_3 -1)$ for integers $p_i \ne 0,1$. 
\item[(ii)] The $0$-surgery on $P(2m, 2n +1, -2n - 1)$ for integers $m \ne 0$ and $n \ge 1$. 
\item[(iii)] The $0$-surgery on $M(-1/2, 1/3, 1/(6 + 1/2s))$ for an integer $s \ne 0$.
\end{enumerate}

Notice that the first class of pretzel knots are of genus one. 
It is known that a genus one fibered knot is either the trefoil knot or the figure-eight knot \cite{Gonzalez-Acuna1970}. 
Therefore $P(2p_1 -1, 2p_2 -1, 2p_3 -1)$ for integers $p_i \ne 0,1$ is not fibered. 

On the third case, calculating the Alexander polynomial, we have
$$\Delta_{M(-1/2, 1/3, 1/(6 + 1/2s))}(t) = -s(t^2+t^{-2}) +2s + 1.$$
Since the Alexander polynomials of fibered knots have to be monic 
(see \cite{RolfsenBook} for example), 
if the knot $M(-1/2, 1/3, 1/(6 + 1/2s))$ is fibered, then $s = \pm1$. 
This completes the proof of Claim~\ref{clm:cand}. 
\end{proof}

However, the $0$-surgeries on the knots listed in Claim~\ref{clm:cand} are not Seifert fibered surgeries by Lemmas~\ref{lem:pretzel} and \ref{lem:non-pretzel} respectively. 
Therefore a Montesinos knot of length three admits no toroidal Seifert fibered surgery, 
and this completes the proof of Theorem~\ref{thm:main}. 
\end{proof}

\begin{remark}\label{rem:fibered}
By using Gabai's arguments~\cite{Gabai1986a}, we see that each of the knots listed in Claim~\ref{clm:cand} is fibered. 
\end{remark}

Let $\mathbf{B}_l (S^2)$ be the $l$-string braid group on the $2$-sphere. 
We denote by $\sigma_1, \dots, \sigma_{l-1}$ the standard generators of $\mathbf{B}_{l}(S^2)$. 
See \cite{MurasugiKurupitaBook} for details about braids on the $2$-sphere. 
The following lemma is used in both Sections~\ref{sec:pretzel} and \ref{sec:Monte}.

\begin{lemma}\label{lem:exp}
Let $\widehat{b}$ and $\widehat{b'}$ be the closed braids for $b \in \mathbf{B}_l (S^2)$ and $b' \in \mathbf{B}_{l'} (S^2)$ respectively. 
If $\widehat{b}$ is isotopic to $\widehat{b'}$ in $S^2 \times S^1$, 
then the numbers of strands of $b$ and $b'$ coincide, 
i.e., $l = l'$ holds. 
Furthermore, with respect to the standard generators, 
the exponent sum of $b$ is congruent to that of $b'$ modulo $2(l-1)$. 
\end{lemma}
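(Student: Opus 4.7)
My plan is to treat the two assertions separately: the equality $l = l'$ via a homological argument, and the congruence of exponent sums via the abelianization of $\mathbf{B}_l(S^2)$ together with a Markov-type theorem for closed braids in $S^2 \times S^1$.

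\textbf{Strand count.} The group $H_1(S^2 \times S^1;\mathbb{Z})$ is infinite cyclic, generated by a fiber $\{*\} \times S^1$. With its natural orientation, each strand of a closed braid wraps once positively around the $S^1$-factor, so $[\widehat{b}] = l \cdot [\{*\} \times S^1]$. Since the homology class of an oriented link is an isotopy invariant, the hypothesis $\widehat{b} \simeq \widehat{b'}$ forces $l = l'$. Equivalently, one may observe that $\widehat{b}$ meets each fiber transversely in exactly $l$ points, all with the same sign, so the geometric intersection number with a fiber is preserved under any ambient isotopy.

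\textbf{Exponent sum.} The Fadell--Van Buskirk presentation of $\mathbf{B}_l(S^2)$ consists of the standard braid relations together with the spherical relation $\sigma_1 \sigma_2 \cdots \sigma_{l-1}^{2} \cdots \sigma_2 \sigma_1 = 1$, whose total exponent is $2(l-1)$. Abelianizing collapses every $\sigma_i$ to a common generator $t$ and yields $\mathbf{B}_l(S^2)^{\mathrm{ab}} \cong \mathbb{Z}/2(l-1)\mathbb{Z}$. Hence the exponent sum descends to a well-defined homomorphism $e \colon \mathbf{B}_l(S^2) \to \mathbb{Z}/2(l-1)\mathbb{Z}$, automatically invariant under conjugation. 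Now I would appeal to a Markov-type theorem for closed braids in $S^2 \times S^1$: isotopic closed braids are related by conjugations and (de)stabilizations. The strand-count step rules out (de)stabilizations, so $b$ and $b'$ must be conjugate in $\mathbf{B}_l(S^2)$, giving $e(b) \equiv e(b') \pmod{2(l-1)}$.

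\textbf{Main obstacle.} The delicate ingredient is the Markov-type theorem in the $S^2 \times S^1$ setting. While essentially folklore---adapting Birman's classical argument, with extensions to closed braids in $F \times S^1$ in work such as Skora's---a self-contained justification would require analyzing the codimension-one singularities encountered in a generic ambient isotopy of a closed braid and checking that each can be absorbed by either a conjugation, a standard braid relation, or the spherical relation. In the final write-up I would cite an appropriate source rather than reprove this.
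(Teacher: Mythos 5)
Your overall route is the same as the paper's: reduce the statement to conjugacy in $\mathbf{B}_l(S^2)$ and then use that the exponent sum is a conjugacy invariant valued in the abelianization $\mathbf{B}_l(S^2)^{\mathrm{ab}} \cong \mathbb{Z}/2(l-1)\mathbb{Z}$. The paper simply cites Murasugi (Section 3 of ``Seifert fibre spaces and braid groups'') for the fact that closed braids in $S^2 \times S^1$ are isotopic if and only if the braids have the same number of strands and are conjugate, and cites Murasugi--Kurpita for the exponent-sum invariance; your abelianization computation via the Fadell--Van Buskirk relation is exactly the content of that second citation, and your homological proof that $l=l'$ is a pleasant self-contained substitute for part of the first.

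There is, however, one step whose justification is flawed as written: the inference ``the strand-count step rules out (de)stabilizations, so $b$ and $b'$ must be conjugate.'' If the Markov-type theorem you invoke really allowed stabilization moves, then equal strand count at the two endpoints would not preclude a chain that stabilizes up and later destabilizes down, and equal strand count plus isotopic closures does not imply conjugacy in general --- in $S^3$, the braids $\sigma_1$ and $\sigma_1^{-1}$ in $\mathbf{B}_2$ both close to the unknot, have the same strand count, and are not conjugate (their exponent sums differ). The correct statement in the $S^2 \times S^1$ setting is stronger and is precisely what the paper cites: there is no stabilization move at all, because adding a strand changes the homology class of the closure in $H_1(S^2\times S^1)\cong\mathbb{Z}$ (your own strand-count argument shows this), so a stabilized closed braid is never isotopic to the original; hence isotopic closed braids in $S^2\times S^1$ are already conjugate. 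Replace your syllogism with that statement (and cite Murasugi for it) and the proof is complete.
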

\begin{proof}
It is a well-known fact that $\widehat{b}$ is isotopic to $\widehat{b'}$ in $S^2 \times S^1$ 
if and only if $b$ and $b'$ are both $l$-strand braids on the $2$-sphere for some positive integer $l$, 
and $b$ is conjugate to $b'$ in $\mathbf{B}_l (S^2)$. 
See \cite[Section 3]{Murasugi1982} for example. 
Furthermore, by \cite[Chapter 11, Proposition 2.3]{MurasugiKurupitaBook}, 
if $b$ is conjugate to $b'$ in $\mathbf{B}_l(S^2)$, 
then the exponent sum of $b$ is congruent to that of $b'$ modulo $2(l-1)$. 
\end{proof}

\section{On pretzel knots}\label{sec:pretzel}

In this section, we prove the following: 

\begin{lemma}
For the pretzel knot $K_{m,n}=P(2m, 2n + 1, -2n-1)$ 
with integers $m \ne 0$ and $n \ge 1$, 
the surgered manifold $K_{m,n}(0)$ is not Seifert fibered. 
\end{lemma}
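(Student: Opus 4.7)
The plan is to derive a contradiction from the assumption that $K_{m,n}(0)$ is Seifert fibered. Since $K_{m,n}$ is fibered (Remark~\ref{rem:fibered}), the manifold $K_{m,n}(0)$ is the mapping torus $M_\phi$ of the monodromy $\phi = \phi_{m,n}$ acting on the closed fiber surface $F_g$ of genus $g = g(K_{m,n}) \ge 1$. For a surface bundle over $S^1$ with fiber of genus at least one, Seifert fiberedness is equivalent to the monodromy being isotopic to a periodic diffeomorphism; the argument proceeds under this periodicity hypothesis until a contradiction is reached via Lemma~\ref{lem:exp}.

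To put oneself in the setting of Lemma~\ref{lem:exp}, I would pass to the sphere braid group using the Montesinos (strong) involution. This involution restricts to an involution $\iota$ on $F_g$ whose quotient is $S^2$ branched over $2g+2$ points, and which commutes up to isotopy with $\phi$. Consequently $\phi$ descends to a sphere braid $\bar\phi \in \mathbf{B}_{2g+2}(S^2)$, and $M_\phi = K_{m,n}(0)$ is identified with the double branched cover of $S^2 \times S^1$ along the closed braid $\widehat{\bar\phi}$.

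The heart of the argument is to compute $\bar\phi$ in two independent ways. First, from an $\iota$-equivariant Stallings--Murasugi plumbing of the fiber surface of $K_{m,n}$, one expresses $\phi$ as an explicit product of Dehn twists along the plumbing core curves; these descend to a product of standard sphere-braid generators, yielding a braid word $\beta_{m,n}$ whose exponent sum $e(\beta_{m,n})$ is an explicit function of $m$ and $n$. Second, by the assumed periodicity of $\phi$, the element $\bar\phi$ is periodic in $\mathbf{B}_{2g+2}(S^2)$, hence conjugate to a root of the central full twist; the exponent sums realised by such periodic elements form a restricted subset of $\mathbb{Z}$. Lemma~\ref{lem:exp} therefore forces $e(\beta_{m,n})$ to lie in this subset modulo $2(2g+1)$.

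The remaining step is to verify that, for every $m \ne 0$ and $n \ge 1$, the value $e(\beta_{m,n})$ is incongruent modulo $2(2g+1)$ to any exponent sum arising from a periodic braid. The main obstacle I anticipate is the explicit construction of the $\iota$-equivariant plumbing and the subsequent identification of the braid word $\beta_{m,n}$ in terms of the generators $\sigma_1,\dots,\sigma_{2g+1}$ of $\mathbf{B}_{2g+2}(S^2)$; the final incongruence check should then reduce to elementary modular arithmetic in $m$ and $n$.
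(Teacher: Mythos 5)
Your strategy---realize $K_{m,n}(0)$ as a surface bundle, note that Seifert fiberedness forces periodic monodromy, descend via the hyperelliptic involution to a sphere braid, and obstruct periodicity by the exponent sum modulo $2(l-1)$---is essentially the method the paper uses for a \emph{different} case, namely $K_-$ in Lemma~\ref{lem:non-pretzel} (Claim~\ref{clm:Monte-}), where a single explicit braid in $\mathbf{B}_6(S^2)$ with exponent sum $3$ is checked against powers of $\Delta$. The gap in your proposal is precisely the step you defer to ``elementary modular arithmetic'': for the two-parameter family $K_{m,n}$ this single congruence cannot rule out all $m\ne 0$. Increasing $m$ by $1$ inserts a full twist in the $2m$-band, which (in the Stallings-twist picture you invoke) multiplies the monodromy by a Dehn twist along a fixed curve of the fiber and hence changes the exponent sum of $\bar\phi$ by a fixed nonzero integer $d$, while the modulus $2(2g+1)$ and the residues realized by periodic sphere braids (a set containing at least $0$ and $2g+1$, coming from powers of $\sigma_1\cdots\sigma_{2g+1}$) stay fixed. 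An arithmetic progression $e_0+dm$ with $0<|d|<2(2g+1)$ meets those residues for infinitely many $m$, so the exponent-sum obstruction is vacuous for infinitely many knots in the family; and if instead the fiber genus grows with $|m|$, there is no reason to expect a uniform incongruence either. Two smaller points: the equivalence ``Seifert fibered $\Leftrightarrow$ periodic monodromy'' is false for genus-one fibers (Nil torus bundles have reducible, non-periodic monodromy), and is rescued here only because a fibered genus-one knot is the trefoil or figure-eight knot, which $K_{m,n}$ is not; and the identification of the quotient of the fiber involution with $S^2$ branched over $2g+2$ points is asserted, not proved.

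For comparison, the paper's proof is structured exactly around the failure of such braid-theoretic congruences to cover all $m$. It treats $n=1$ by the Montesinos trick and a tangle decomposition; for $n\ge2$ it cuts $K_{m,n}(0)$ along the Klein bottle $\widehat F$ arising from the non-orientable pretzel surface, identifies the complement with the exterior of an explicit closed braid $J_{m,n}=\widehat{\beta_{m,n}}$ in $S^2\times S^1$, disposes of $m=\pm1$ by an exponent-sum argument (against torus knots in $S^2\times S^1$, via Kohn's lemma and Lemma~\ref{lem:exp}), and then handles all $|m|\ge2$ at once by proving that the augmented link $L_n=J_{1,n}\cup U$ is hyperbolic and applying the Gordon--Wu bound: two Seifert fibered fillings of a hyperbolic link have distance at most $3$, whereas the slopes $-1/(m-1)$ and $-1/(-m-1)$ have distance $2|m|\ge4$. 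Some device of this kind, capable of excluding all $m$ simultaneously rather than one congruence class at a time, is what your outline is missing.
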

\begin{proof}
We note that the knot $K_{m,n} = P(2m,2n+1,-2n-1)$ is the mirror image of the knot $K_{-m,n} = P(-2m,2n+1,-2n-1)$. 
Thus, if $K_{m,n}(0)$ is not Seifert fibered for $m \ge 1$, then $K_{-m,n}(0)$ is also not Seifert fibered.

We first consider the case where $n=1$ as follows. 

\begin{claim}\label{clm:n=1}
The surgered manifold $K_{m,1}(0)$ is not Seifert fibered. 
\end{claim}
\begin{proof}
Applying the Montesinos trick~\cite{Montesinos1975}, the surgered manifold $K_{m,1} (0)$ is homeomorphic to the double branched covering space of $S^3$ branched along the two-component link $\lambda_m$ depicted in Figure~\ref{fig:MonteTrick}. 
In the decomposition of the link $\lambda_m$ as a partial sum of two 2-string tangles, 
the left part can be viewed as a Montesinos tangle or a Montesinos-m tangle in the sense of~\cite{Eudave-Munoz2002}, see Figure~\ref{fig:decomp}. 
Correspondingly the double branched cover of the left tangle is 
either a Seifert fibered space over the disk with two exceptional fibers of indices $2$ 
or a Seifert fibered space over the M\"{o}bius band 
without exceptional fibers. 
In either of these cases, 
it is verified that 
their Seifert fibrations do not match since $m \ne 0$ (see \cite[Appendix A]{BonahonSiebenmann2010}, \cite[Chapter IV]{JacoBook} for example). 
Thus, $K_{m,1}(0)$ is not Seifert fibered. 
\end{proof}


\begin{figure}[htb]
\includegraphics[width=1.0\textwidth]{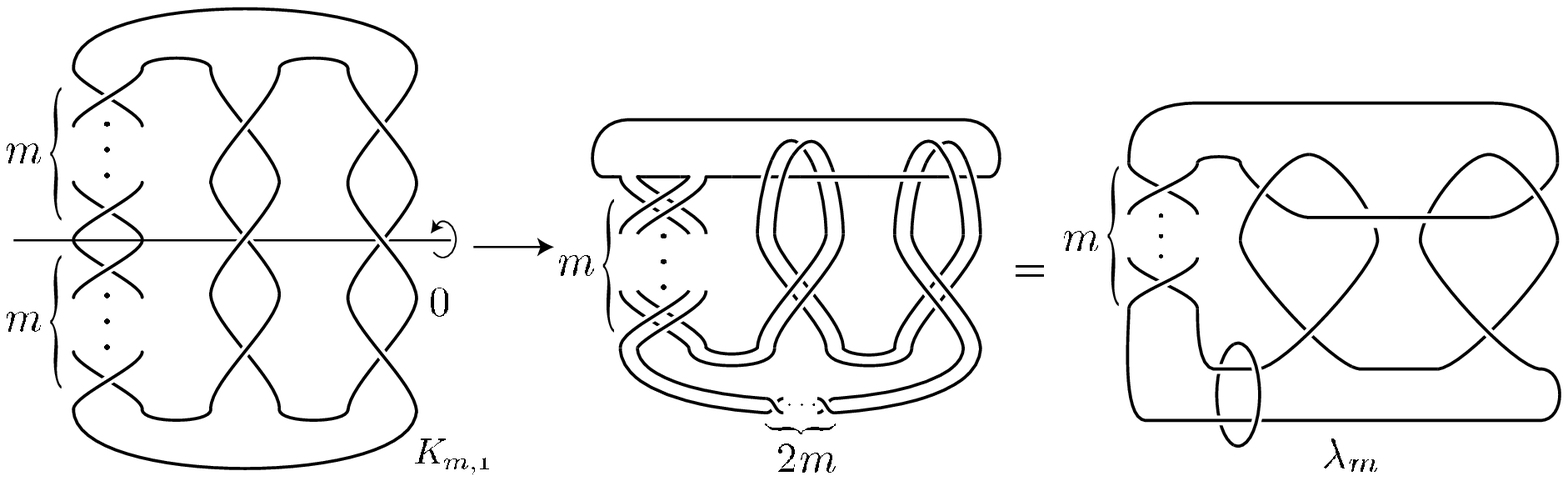}
\caption{}\label{fig:MonteTrick}
\end{figure}
\begin{figure}[htb]
\includegraphics[width=.4\textwidth]{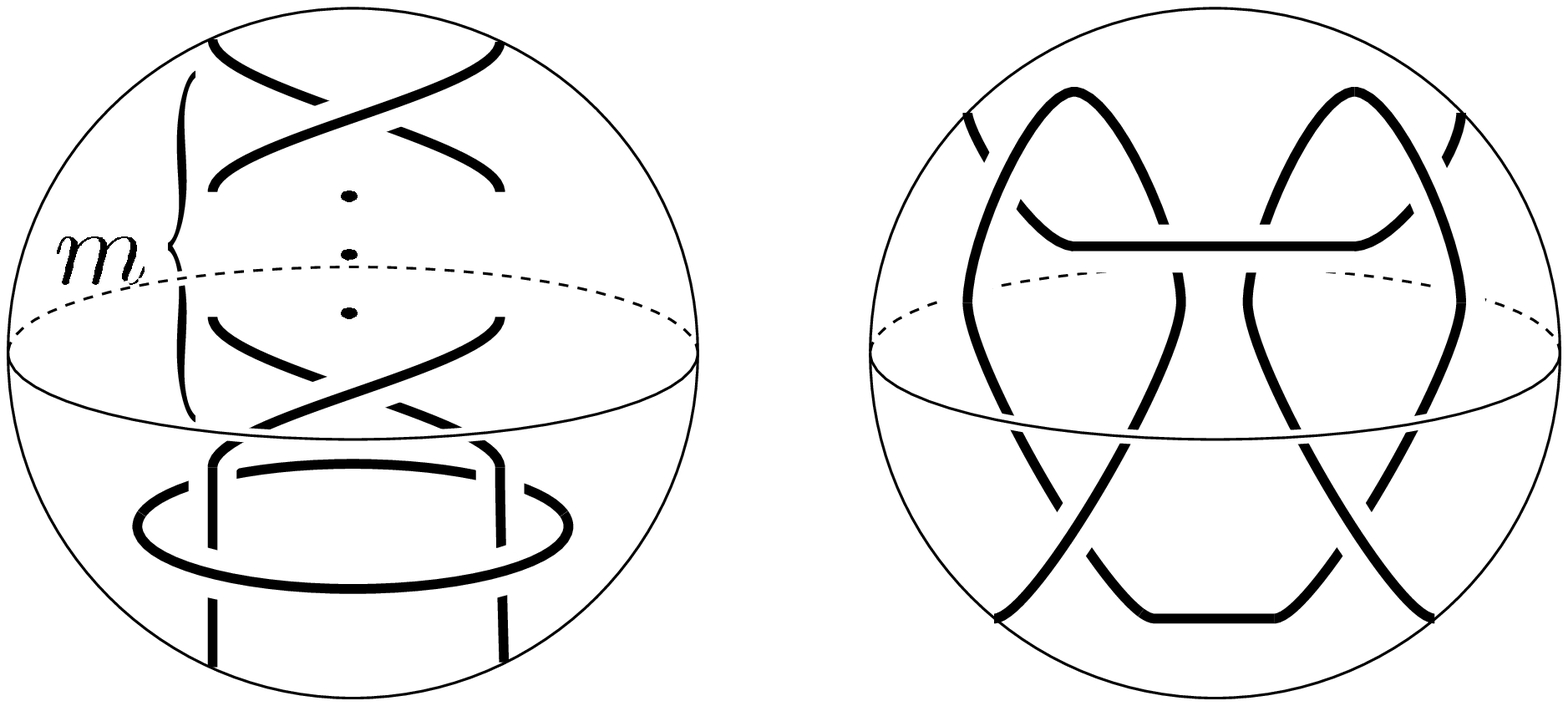}
\caption{}\label{fig:decomp}
\end{figure}

Suppose that $K_{m,n} (0)$ ($n \ge 2$) is Seifert fibered. 
As illustrated in the upper half of Figure~\ref{fig:surgery1}, let $\iota$ be a strong inversion of $K_{m,n}$ with respect to an axis $\alpha$, 
and $F$ a non-orientable spanning surface invariant under $\iota$. 
Let $V$ be the attached solid torus via the $0$-surgery. 
Then there exists a meridian disk $D$ in $V$ such that 
$F \cup D$ gives a Klein bottle $\widehat{F}$ embedded in $K_{m,n}(0)$. 
For a manifold $X$ and a sub-manifold $Y$, we denote by $N(Y)$ a regular neighborhood of $Y$ in $X$ and by $N^\circ(Y)$ the interior of $N(Y)$. 

To consider the remaining cases where $n\ge2$, the following is the key claim. 

\begin{claim}\label{clm:braid}
If $K_{m,n} (0) $ is Seifert fibered, then $K_{m,n} (0) - N^\circ(\widehat{F})$ is also Seifert fibered. 
\end{claim}
\begin{proof}
Let $M_{m,n}=K_{m,n}([\partial F]) =K_{m,n}(0)$. 
We can naturally extend the involution $\iota$ on $E(K_{m,n})$ to that on $M_{m,n}$, denoted by $\widehat{\iota}$, with the axis $\widehat{\alpha}$ appearing as the natural extension of $\alpha$. 
We may assume that $\widehat{F}$ is invariant under $\widehat{\iota}$. 
Then, as shown in \cite{Wu2006}, 
the torus appearing as $\partial N (\widehat{F})$ is incompressible in $M_{m,n}$. 
By \cite[VI.34. Theorem]{JacoBook}, such a torus is isotoped so that it is saturated in a Seifert fibration of $M_{m,n}$. 
That is, the $3$-manifold $M_{m,n} - N^\circ (\widehat{F})$ admits a Seifert fibration. 
\end{proof}

\begin{figure}[htb]
\includegraphics[width=0.95\textwidth]{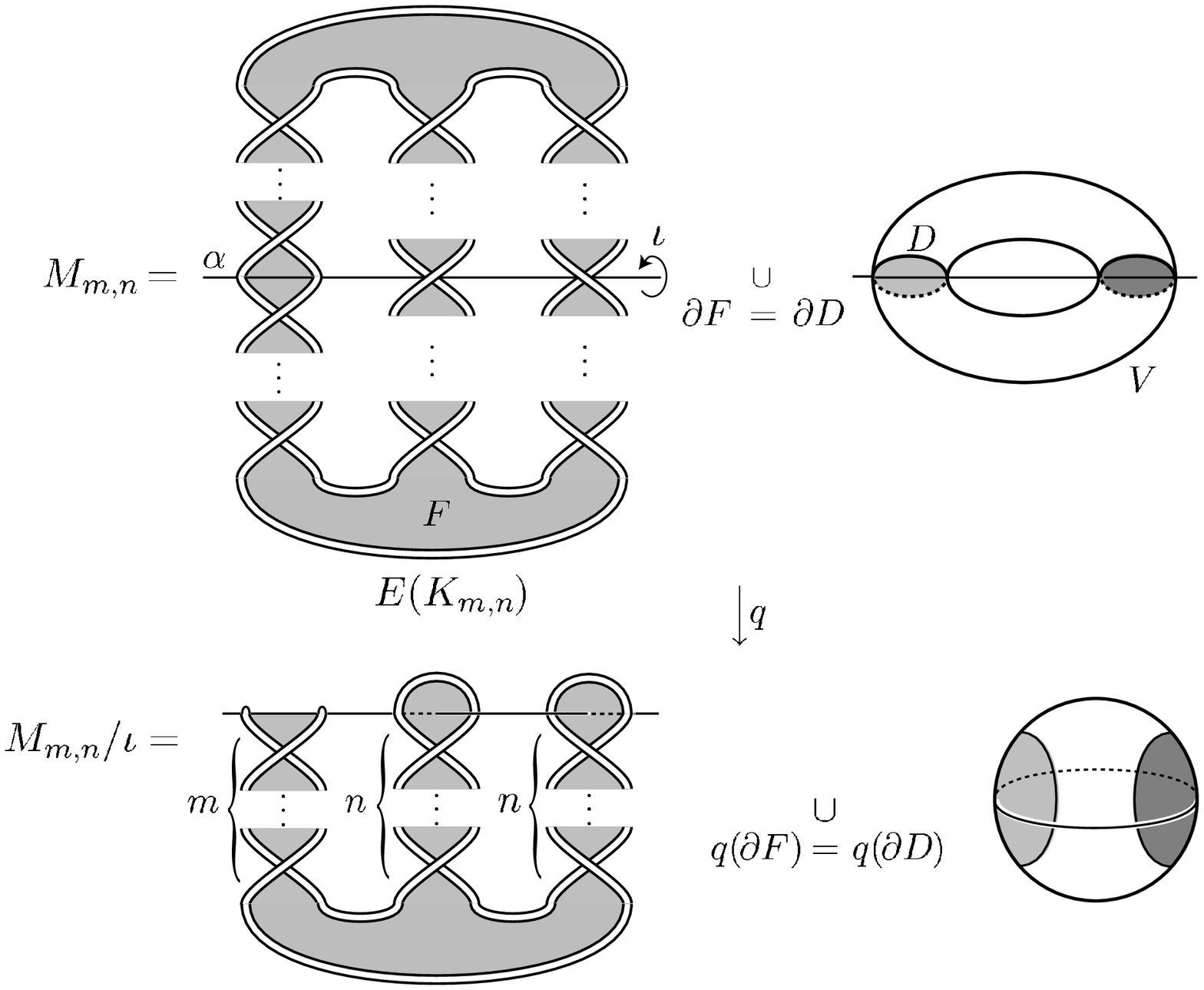}
\caption{}\label{fig:surgery1}
\end{figure}

Then $M_{m,n} / \widehat{\iota} = S^3$ is described as shown in the lower half of Figure~\ref{fig:surgery1}. 
Let $q: M_{m,n} \rightarrow S^3$ be the quotient map with respect to the involution $\widehat{\iota}$. 
Removing $N^\circ(q(\widehat{F}))$, that is, an open regular neighborhood of the pale shaded disk in the lower half of Figure~\ref{fig:surgery1}, and deforming by isotopy, we obtain the upper half of Figure~\ref{fig:remove}. 
In the upper half of Figure~\ref{fig:remove}, the arc in the boundary of the shaded disk lying on the boundary of the right $3$-ball is identified with the gray curve lying on the boundary of the left $3$-ball. 
Then the branch set with respect to the quotient map $q$ appears as two strings in a $3$-ball; see the lower half of Figure~\ref{fig:remove}. 
That is, the pair 
$( M_{m,n} / \widehat{\iota} - q(N^\circ(\widehat{F})) , (( M_{m,n}  - N^\circ(\widehat{F}) ) \cap \widehat{\alpha} ) / \widehat{\iota} \,)$ 
gives a two-string tangle $T_{m,n}$ in $S^3$ such that the double branched covering space for this tangle is homeomorphic to $M_{m,n} - N^\circ(\widehat{F})$. 

On the other hand, the tangle $T_{m,n}$ can be deformed as shown in Figures~\ref{fig:tangle1} and \ref{fig:tangle2}. 
Then taking the double branched covering branched along $T_{m,n}$, we obtain the exterior of a knot, denoted by $J_{m,n}$, in $S^2 \times S^1$; see Figure~\ref{fig:J}. 
From Figure~\ref{fig:J}, we see that the knot $J_{m,n}$ is the closed braid of $\beta_{m,n}$, where $\beta_{m,n} = \sigma_1 \cdots \sigma_n \sigma_{n+1}^{2m-1}\sigma_{n+2}^{-1} \cdots \sigma_{2n}^{-1} \in \mathbf{B}_{2n+1}(S^2)$ (see Figure~\ref{fig:braid}). 
Now we have the following. 

\begin{claim}\label{clm:J}
The $3$-manifold $K_{m,n}(0) - N^\circ(\widehat{F})$ is homeomorphic to the exterior $E(J_{m,n})$ of $J_{m,n}$ in $S^2 \times S^1$. 
\end{claim}

\begin{figure}[htb]
\includegraphics[width=0.95\textwidth]{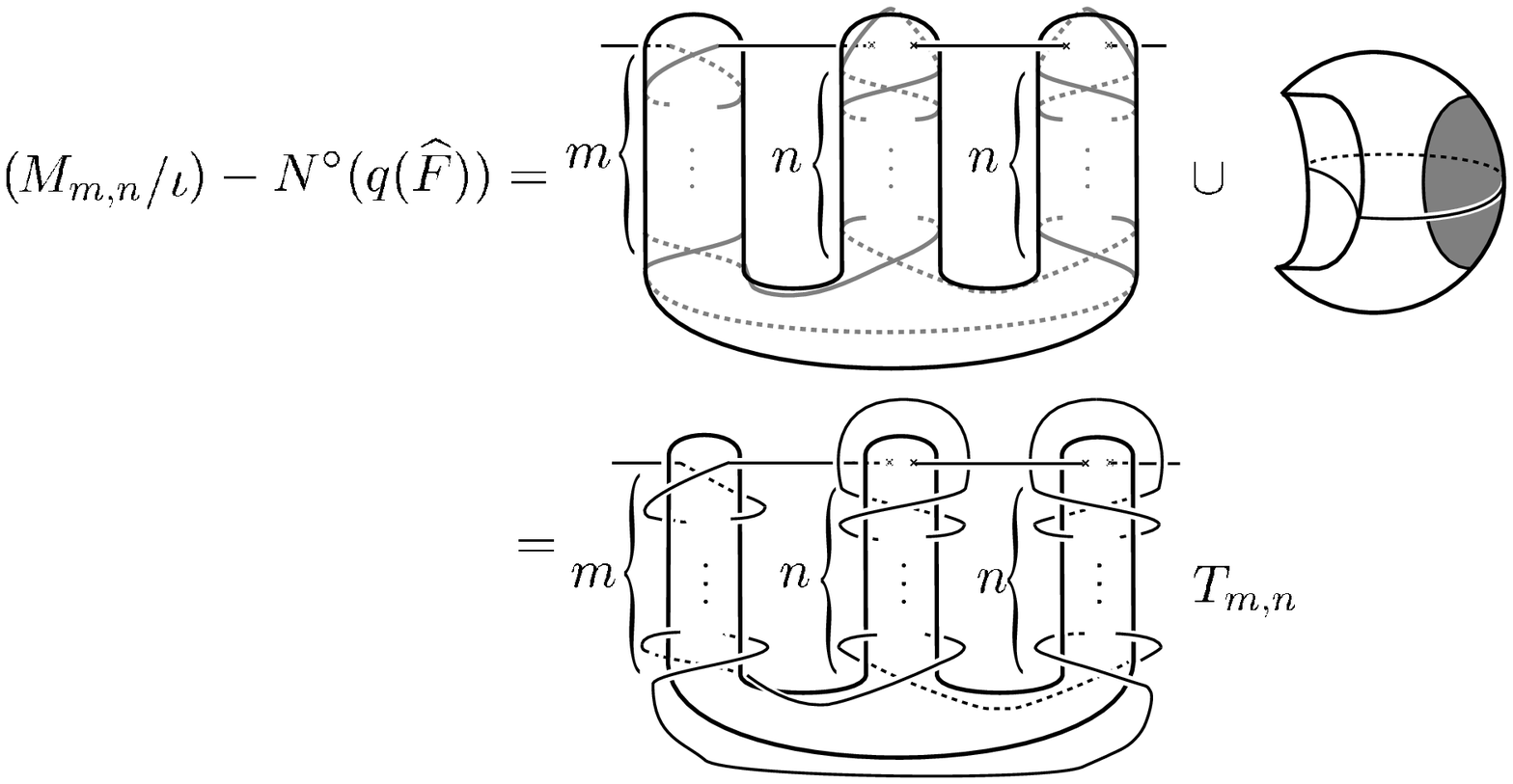}
\caption{}\label{fig:remove}
\end{figure}

\begin{figure}[htb]
\includegraphics[width=0.95\textwidth]{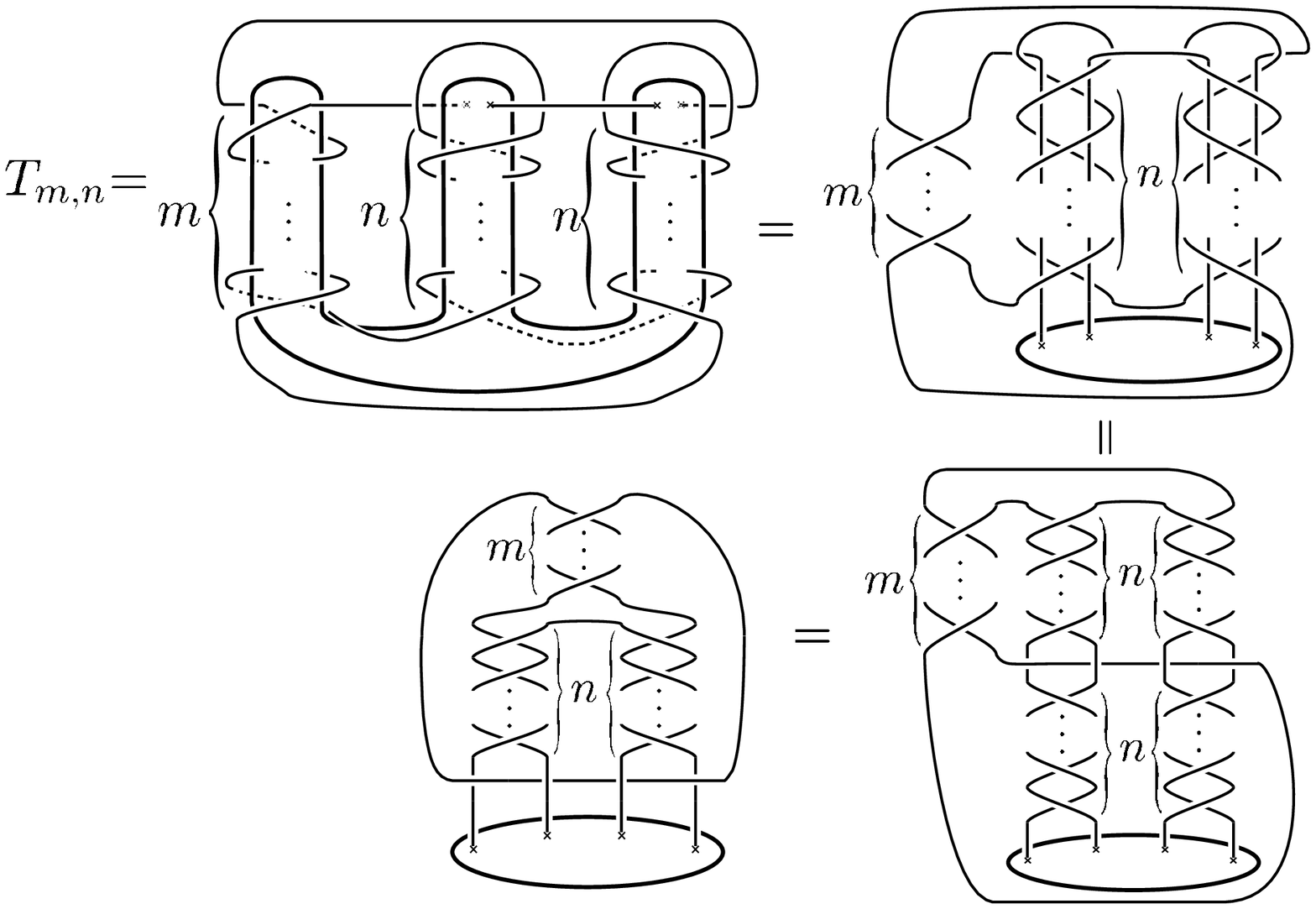}
\caption{}\label{fig:tangle1}
\end{figure}
\begin{figure}[htb]
\includegraphics[height=0.9\textheight]{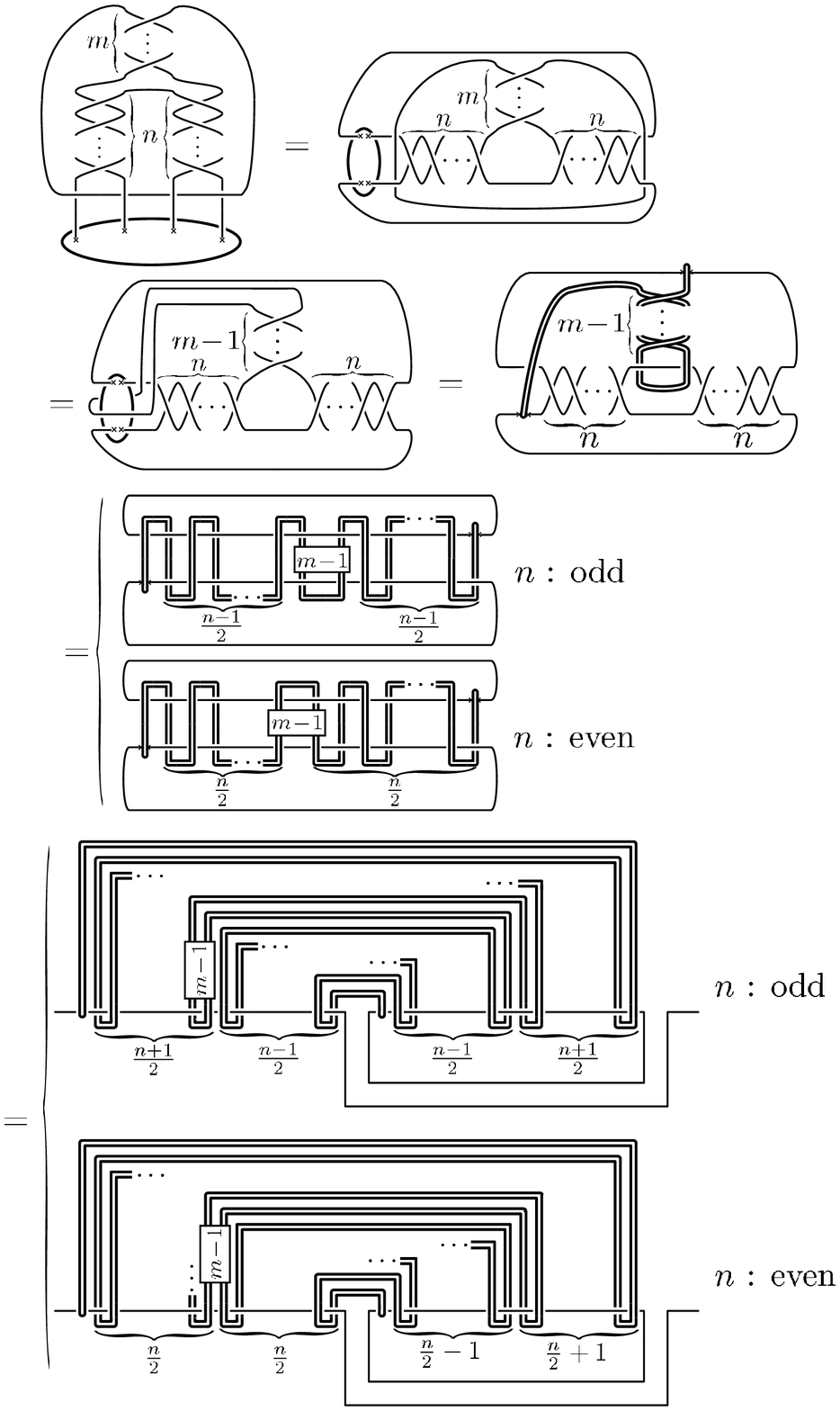}
\caption{}\label{fig:tangle2}
\end{figure}

\begin{figure}[htb]
\includegraphics[width=0.95\textwidth]{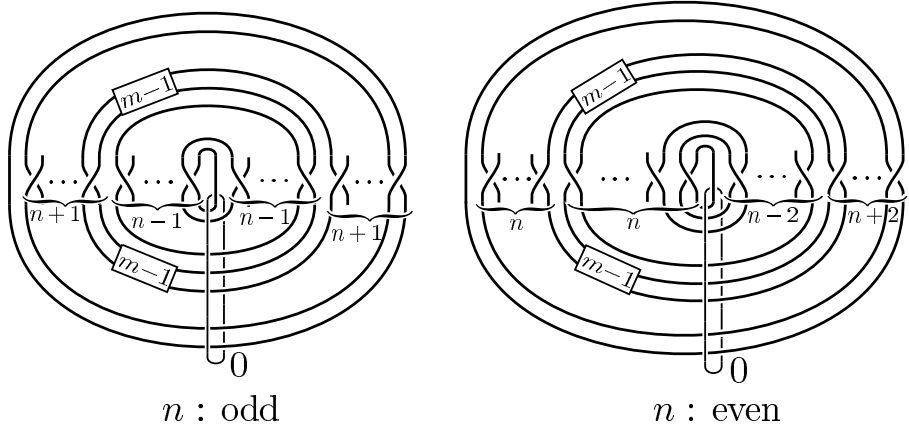}
\caption{$J_{m,n} = \widehat{\beta_{m,n}}$.}\label{fig:J}
\end{figure}
\begin{figure}[htb]
\includegraphics[width=0.2\textwidth]{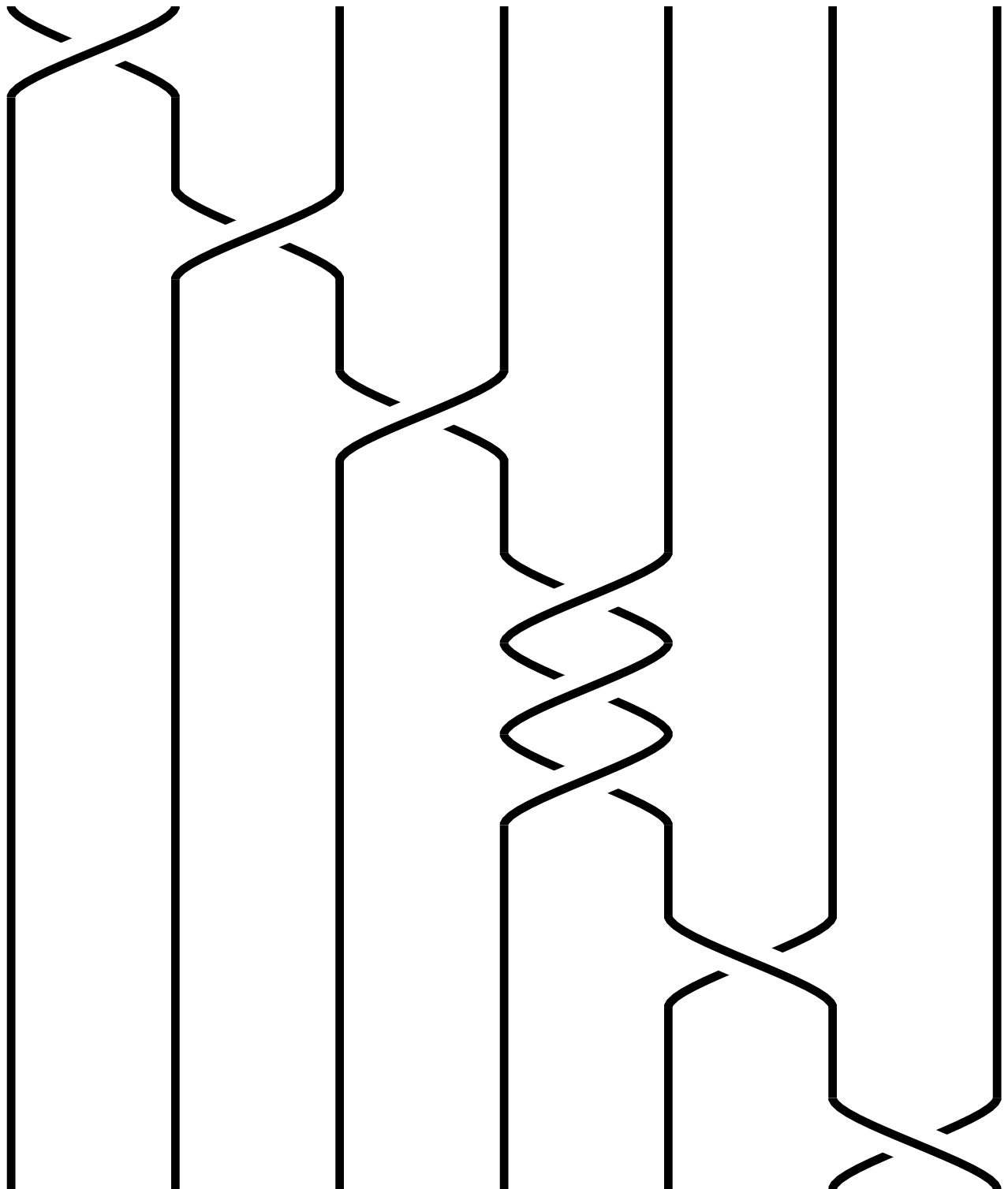}
\caption{$\beta_{2,3} = \sigma_1 \sigma_2 \sigma_3 \sigma_4^3 \sigma_5^{-1} \sigma_6^{-1}$.}\label{fig:braid}
\end{figure}

By Claims~\ref{clm:n=1}--\ref{clm:J}, it suffices to show that the knot $J_{m,n} \subset S^2 \times S^1$ 
has the exterior which is not Seifert fibered for $m \ne 0$, $n\ge2$. 

We start with the following claim.

\begin{claim}\label{clm:m=1}
The exterior $E(J_{\pm 1,n})$ of $J_{\pm 1,n}$ in $S^2 \times S^1$ is not Seifert fibered.
\end{claim}
\begin{proof}
It suffices to show that $E(J_{1,n})$ with $n\ge2$ is not Seifert fibered since $E(J_{-1,n})$ is orientation reversingly homeomorphic to $E(J_{1,n})$. 
We show that if $E(J_{1,n})$ is Seifert fibered, then $n=1$. 
Suppose that 
the exterior of $J_{1,n} \subset S^2 \times S^1$ admits a Seifert fibration. 
By \cite[Lemma 4]{Kohn1991}, this implies that $J_{1,n}$ is isotopic to a torus knot in $S^2 \times S^1$. 
Note that the exponent sum of a $(2n+1)$-string torus braid is congruent to $0$ or $2n$ modulo $4n$. 
Thus, by Lemma~\ref{lem:exp}, 
the exponent sum of $\beta_{1,n}$ must be congruent to $0$ or $2n$ modulo $4n$. 
On the other hand, we see that $\beta_{1,n}$ has the exponent sum $2$ and thus we have $n=1$. 
\end{proof}

Here we note that the exterior of $J_{1,n}$ is atoroidal since the exterior of $K_{m,n}$ admits only one punctured torus up to isotopy~\cite{Wu2006}. 
Also note that the exterior of $J_{1,n}$ is irreducible since it is a $(2n+1)$-punctured disk bundle over the circle. 
Therefore Claim~\ref{clm:m=1} implies that $J_{\pm 1,n}$ is hyperbolic for $n \ge 2$. 

We now consider the remaining case where $|m| \ge 2$ and $n \ge 2$ in the following two claims (Claims~\ref{clm:m>1} and \ref{clm:Ln-hyp}). 
To state these, let us consider 
the augmented two-component link $L_n = J_{1,n} \cup U$ in $S^2 \times S^1$ as shown in Figure~\ref{fig:Ln}. 
That is, $J_{1,n}$ is the knot considered in Claim~\ref{clm:m=1}, 
and $U$ is the trivial knot lying on a level sphere $S^2 \times \{ t \}$ such that the $-1/(m-1)$-surgery on $U$ yields the knot $J_{m,n}$ (see Figure~\ref{fig:Ln}). 

\begin{figure}[htb]
\includegraphics[width=0.45\textwidth]{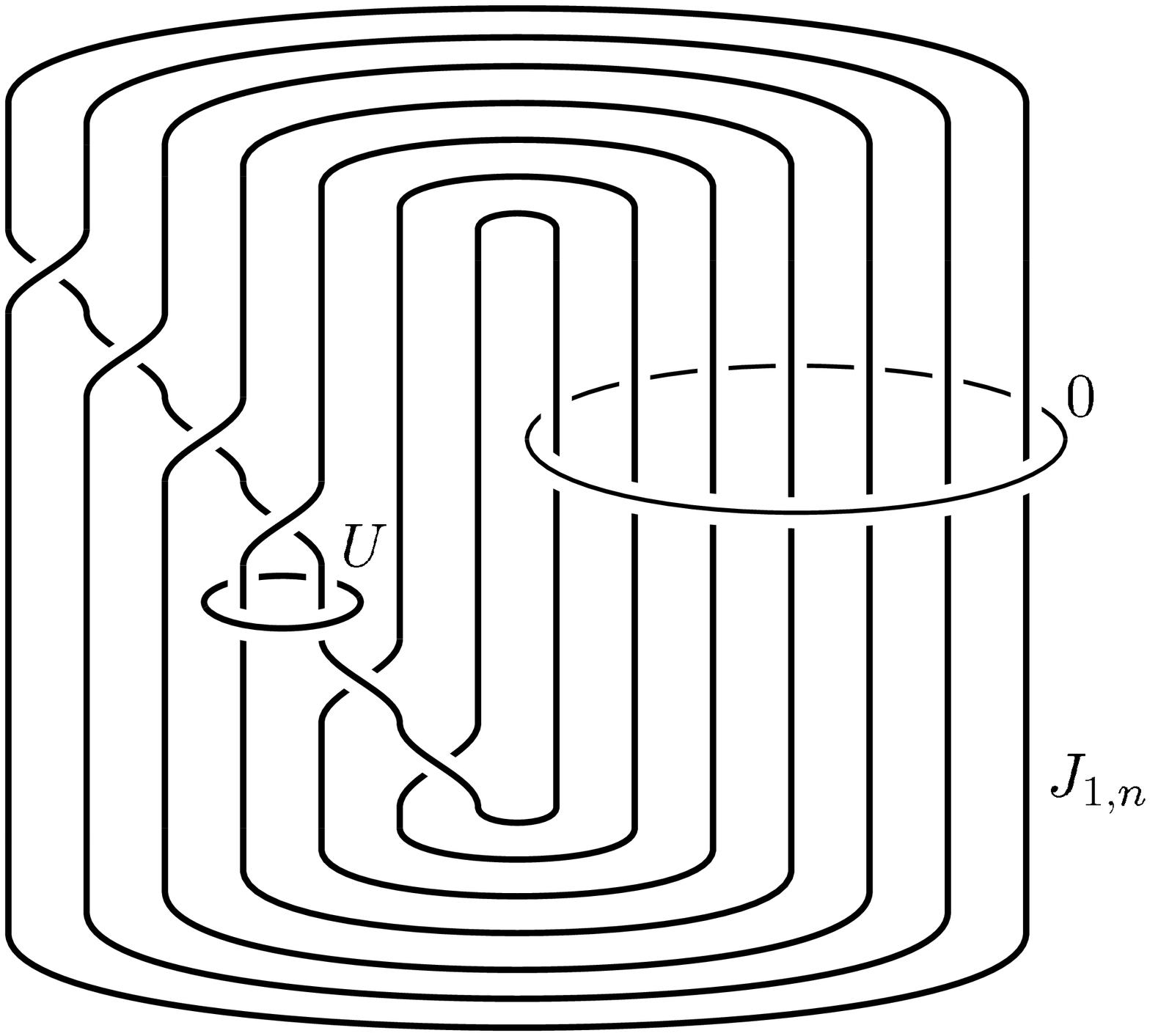}
\caption{The case where $n=3$; $L_3$.}\label{fig:Ln}
\end{figure}

\begin{claim}\label{clm:m>1}
If the link $L_n$ is hyperbolic, 
then $S^2 \times S^1 - N^\circ (J_{m,n})$ 
is not Seifert fibered for integers $|m| \ge 2$ and $n \ge 2$. 
\end{claim}
\begin{proof}
Suppose that the link $L_n$ is hyperbolic.
Assume for a contradiction that 
the exterior of $J_{m,n} \subset S^2 \times S^1$ is Seifert fibered for some $|m| \ge 2$ and $n\ge2$, and 
then $J_{-m,n}$ is also Seifert fibered. 
Notice that the knot $J_{m,n}$ is obtained from $J_{1,n}$ by twisting along $U$, 
equivalently, by the $-1/(m-1)$-surgery on $U$. 
This means that 
both of Dehn surgeries on $U$ along the slopes $- 1/ (m-1)$ and $- 1/ (-m-1)$ are Seifert fibered. 
By \cite[Corollary 1.2]{GordonWu2000}, the distance of such a pair is at most three since the link $L_n$ is hyperbolic. 
This contradicts the condition on the distance of the slopes $- 1/ (m-1)$ and $- 1/ (-m-1)$; 
$| (- 1)(-m-1) - (- 1)(m-1)| = | m+1 +m -1|=2 |m| \ge 4$. 
\end{proof}

\begin{claim}\label{clm:Ln-hyp}
The link $L_n$ is hyperbolic. 
\end{claim}
\begin{proof}
We show that $E(L_n)$ contains 
no essential sphere (i.e., $E(L_n)$ is irreducible), 
no essential disk (i.e., $E(L_n)$ is boundary-irreducible), 
no essential torus (i.e., $E(L_n)$ is atoroidal), 
and $E(L_n)$ is not Seifert fibered in Subclaims~\ref{subclm:1}--\ref{subclm:4} respectively. 
Then, by \cite{Thurston1982}, the link $L_n$ is hyperbolic. 

First we set the following notations used in the proof. 
Let $D_U$ be the disk bounded by $U$, which lies on a level sphere $S^2 \times \{ t \}$ and intersects $J_{1,n}$ transversely at just two points. 
Let $P$ be the twice-punctured disk appearing as the intersection $E(L_n) \cap D_U$, which is incompressible in $E(L_n)$ since it is an essential sub-surface of a fiber surface of $E(J_{1,n})$. 
There are only three isotopy classes of essential loops on $P$, 
two of which are parallel to the meridian of $J_{1,n}$, 
and the other one of which is parallel to $U$. 
For simplicity, we say that the former are {\it of type $(a)$} and the later is {\it of type $(b)$} (see Figure~\ref{fig:loop}). 
\begin{figure}[htb]
\includegraphics[width=0.6\textwidth]{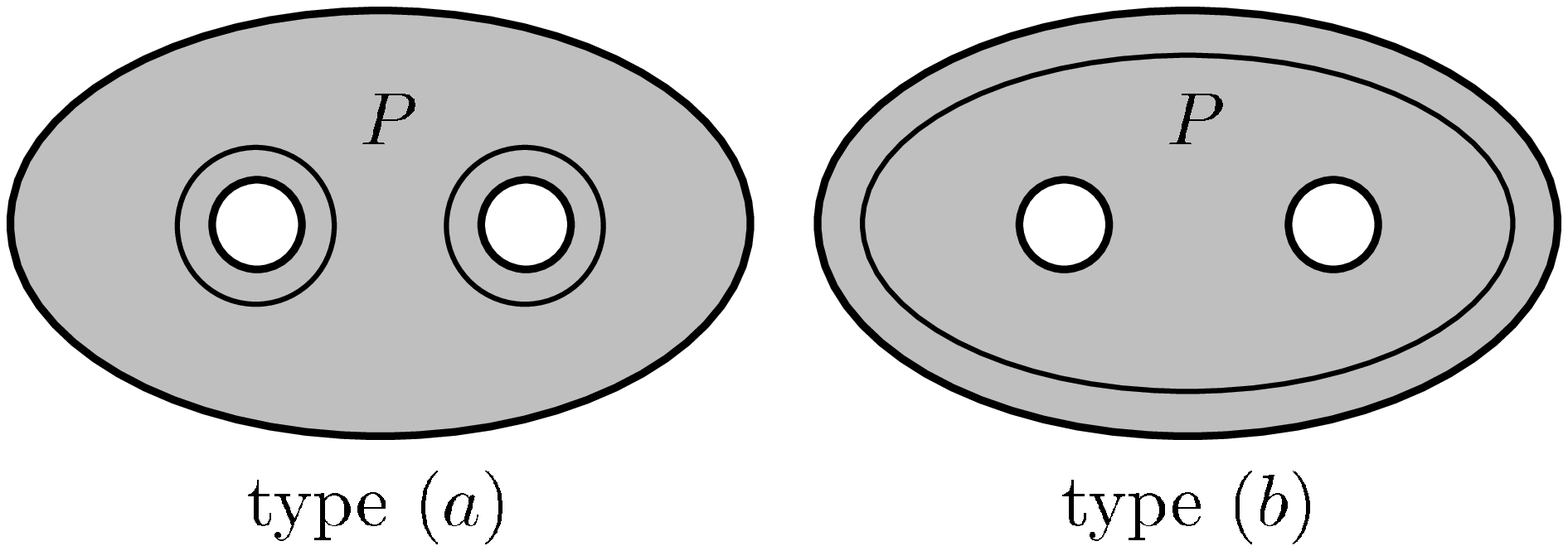}
\caption{}\label{fig:loop}
\end{figure}

The following subclaim is easily shown by Figure~\ref{fig:tunnel}. 
\setcounter{subclaim}{-1}
\begin{subclaim}\label{subclm:0}
The exterior $E(L_n \cup D_U)$ of $L_n \cup D_U$ in $S^2 \times S^1$ is homeomorphic to a handlebody of genus two. 
\end{subclaim}

\begin{figure}[htb]
\includegraphics[width=0.9\textwidth]{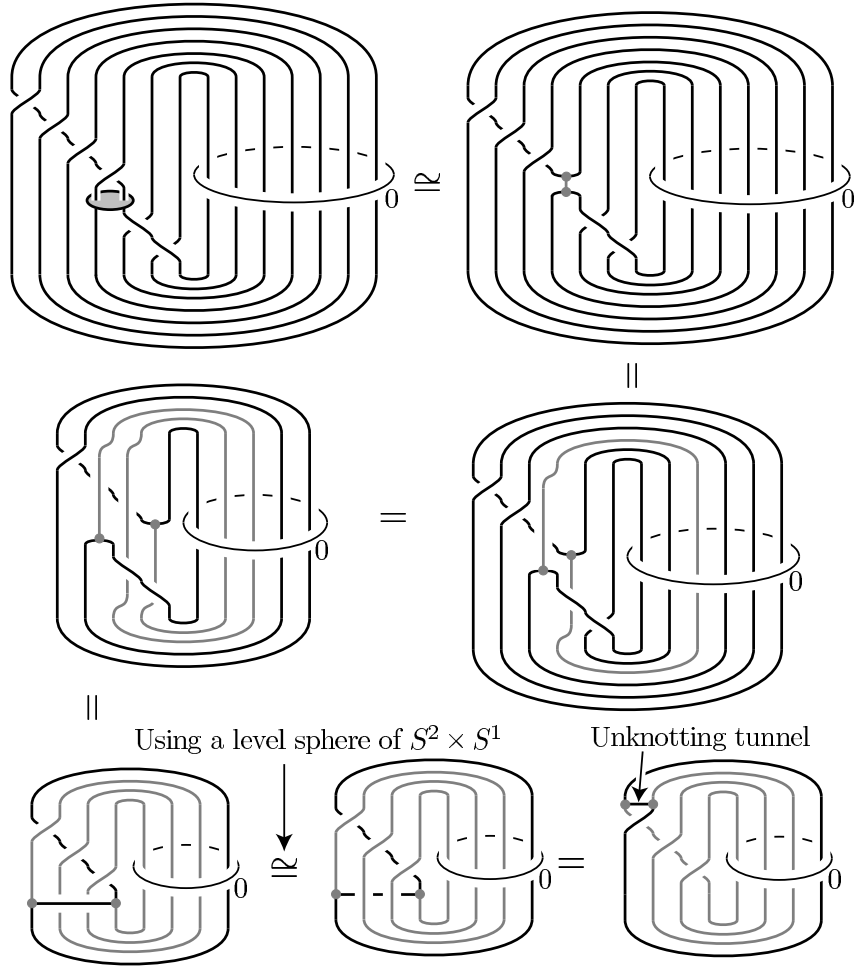}
\caption{The case where $n=3$. Other cases are shown in a similar way.}\label{fig:tunnel}
\end{figure}

We note that Subclaim~\ref{subclm:0} implies that $E(L_n \cup D_U)$ is irreducible and atoroidal.

\begin{subclaim}\label{subclm:1}
The exterior $E(L_n)$ is irreducible. 
\end{subclaim}
\begin{proof}
Assume for a contradiction that $E(L_n)$ contains an essential sphere $S$, that is, $S$ does not bound a $3$-ball in $E(L_n)$. 
Then $S$ must have non-empty intersection with $P$ since, if otherwise, $E(L_n \cup D_U)$ contains an essential sphere, which contradicts Subclaim~\ref{subclm:0}. 
Furthermore $S$ can be isotoped so that the intersection $S \cap P$ consists of only essential loops on $P$ by Subclaim~\ref{subclm:0}. 
Since each of the loops bounds a disk on $S$, it bounds a disk in $E(L_n)$. 
This contradicts that $P$ is incompressible in $E(L_n)$. 
\end{proof}

\begin{subclaim}\label{subclm:2}
The exterior $E(L_n)$ is boundary-irreducible. 
\end{subclaim}
\begin{proof}
Assume for a contradiction that $E(L_n)$ contains an essential disk $D$. 
Then, by compressing the torus component of $\partial E(L_n)$ along $D$, we have an embedded $2$-sphere in $E(L_n)$. 
By Subclaim~\ref{subclm:1}, the $2$-sphere bounds a $3$-ball, but 
this implies that $E(L_n)$ is homeomorphic to a $3$-ball with a $1$-handle, that is, a solid torus. 
A contradiction occurs. 
\end{proof}

\begin{subclaim}\label{subclm:3}
The exterior $E(L_n)$ is atoroidal. 
\end{subclaim}
\begin{proof}
Assume for a contradiction that $E(L_n)$ contains an essential torus $T$. 
Then $T$ must have non-empty intersection with $P$ since, if otherwise, $E(L_n \cup D_U)$ contains an essential torus, which contradicts Subclaim~\ref{subclm:0}. 
We isotope $T$ so that the intersection $T \cap P$ consists of loops which are essential in both $T$ and $P$. 
This is possible since $T$ and $P$ are incompressible in $E(L_n)$ together with Subclaim~\ref{subclm:1}. 
Then we have the following three cases:
\begin{enumerate}
\item[(i)]
$T \cap P$ includes both of loops of type $(a)$ and $(b)$. 
\item[(ii)]
$T \cap P$ consists of loops of type $(b)$ only. 
\item[(iii)]
$T \cap P$ consists of loops of type $(a)$ only. 
\end{enumerate}

\smallskip

\noindent
Case (i): 
In this case, there exists a loop $c_a$ of type $(a)$ and a loop $c_b$ of type $(b)$ in $T \cap P$, which cobounds an annulus $A_{a , b}$ on $T$, namely, $\partial A_{a , b} = c_a \cup c_b$. 
Then we have an embedded $2$-sphere $S_{a,b}$ in $S^2 \times S^1$, which is obtained from $A_{a,b}$ by capping of the boundaries by using the meridian disk bounded by $c_a$ and the sub-disk bounded by $c_b$ on $D_U$. 
Then $S_{a,b}$ intersects $J_{1,n}$ at just three points and thus $S_{a,b}$ is non-separating. 
This contradicts that $J_{1,n}$ is the closed $(2n+1)$-braid on $S^2$ and $n \ge 2$. 

\smallskip

\noindent
Case (ii): 
Recall that $E(J_{1,n})$ is atoroidal. 
Thus $T$ must be either boundary-parallel or compressible in $E(J_{1,n})$.

Suppose first that $T$ is boundary-parallel in $E(J_{1,n})$. 
Then $T$ bounds a solid torus $V_T$ in $S^2 \times S^1$ 
with $J_{1,n}$ as a core curve. 
Let $c_b$ be the component of $T \cap P$ 
which is innermost on $D_U$ among $T \cap P$. 
Then, since $T \cap P$ consists of loops of type $(b)$ only, 
$c_b$ bounds a disk in $D_U$ 
intersecting $J_{1,n}$ in two points. 
On the other hand, 
since $T$ bounds a solid torus $V_T$ in $S^2 \times S^1$ with $J_{1,n}$ as a core curve, 
there is a compressing disk $D_J$ of $T$ 
which intersects $J_{1,n}$ exactly once. 
Note that $c_b$ and $\partial D_J$ are isotopic on $T$, 
and hence 
they must have the same linking number 
with $J_{1,n}$ in $V_T$, a contradiction. 

Suppose next that $T$ is compressible in $E(J_{1,n})$, 
namely, there exists a compressing disk $D_T$ for $T$ in $E(J_{1,n})$. 
Note that $D_T \cap U \ne \emptyset$ 
since $T$ is incompressible in $E(L_n)$. 
Let $S_T$ be the $2$-sphere embedded in $E(J_{1,n})$ 
obtained from $T$ by compressing along $D_T$. 
Then, by irreducibility of $E(J_{1,n})$, 
there exists a $3$-ball $B_T$ bounded by $S_T$ in $E(J_{1,n})$. 
Let $Q_T$ be the compact sub-manifold of $E(J_{1,n})$ 
with $\partial Q_T = T$. 

If $B_T \supset D_T$, then $B_T$ is obtained from $Q_T$ by attaching a $2$-handle and thus $Q_T$ is obtained from $B_T$ by removing an open regular neighborhood of an arc properly embedded in $B_T$. 
Then $Q_T$ satisfies that 
$Q_T \cap D_T = \partial Q_T \cap \partial D_T = T \cap \partial D_T$, and also satisfies that $Q_T \cap U = \emptyset$ since $D_T \cap U \ne \emptyset$ and $T \cap U = \emptyset$. 
On the other hand, 
we find a sub-annulus $A_U$ on $D_U$ which connects $U$ and $T$ by the assumption that $T \cap P$ consists of loops of type $(b)$ only. 
Let $N_U$ be a regular neighborhood of $Q_T \cup D_T \cup A_U$ in $E(J_{1,n})$. 
Then $N_U$ is homeomorphic to a $3$-ball 
such that $U \subset N_U \subset E(J_{1,n})$. 
This means that there exists a 2-sphere $\partial N_U$ embedded in $E(L_n)$, separating the two boundary components. 
This contradicts Subclaim~\ref{subclm:1}. 

If $B_T \not\supset D_T$, 
then $Q_T$ is homeomorphic to a solid torus $V$ in $E(J_{1,n})$, 
which is obtained from $B_T$ by attaching a $1$-handle. 
Note that $D_T$ is a meridian disk in $V$. 
Also note that $U \subset V$ since $D_T \cap U \ne \emptyset$. 
Let $c$ be the loop in $T \cap P = T \cap D_U$ 
which is innermost on $D_U$. 
That is, $c$ bounds a disk $D$ on $D_U$ such that 
$D$ intersects $J_{1,n}$ at just two points 
and $D \cap T = \partial D \cap T = c$. 
Note that $c \subset T$ and 
$D \cap V = \partial D \cap \partial V = \partial D \cap T = c$. 
Then let us consider the intersection of $c$ and $\partial D_T$ on $T$. 

If $c$ is isotopic to $\partial D_T$ on $\partial V =T$, 
then there exists the $2$-sphere $S$ in $S^2 \times S^1$ consisting of $D_T$ and $D$, which intersects $J_{1,n}$ at just two points. 
Notice that the algebraic intersection number between $S$ and $J_{1,n}$ is two and thus $S$ is non-separating. 
This contradicts that $J_{1,n}$ is the closed $(2n+1)$-braid on $S^2$ and $n \ge 2$. 

If the slope of $c$ and that of $\partial D_T$ have 
distance greater than one, 
i.e., the minimal geometric intersection number of 
their representatives is greater than one 
on $\partial V =T$, 
then a regular neighborhood of $V \cup D$ gives a punctured lens space $L(p,q)$ with $q >1$ embedded in $S^2 \times S^1$, where $q$ is the distance between the slopes $c$ and $\partial D_T$. 
This contradicts that $S^2 \times S^1$ is prime. 

If the slopes of $c$ and $\partial D_T$ 
have distance just one on $\partial V =T$, then $U$ must be a core curve in $V$ as follows: 
Let $c' \subset T \cap P$ be the outermost loop on $D_U$. 
Since $c$ is parallel to $c'$ on $T$ and $c'$ is isotopic to $U$ in $V$, $c$ is isotopic to $U$ in $V$. 
This implies that $D_T$ is isotoped so that $U$ and $D_T$ intersects at a single point, meaning that $U$ is a core curve in $V$. 
Then $T = \partial V$ is boundary-parallel in $E(L_n)$, contradicting that $T$ is essential in $E(L_n)$. 

\smallskip

\noindent
Case (iii): 
In this case, there must exist an incompressible annulus $A_T$ embedded in $E(L_n)$, which connects $T$ and $\partial N(U)$. 
Assume for a contradiction that 
there are no incompressible annulus embedded in $E(L_n)$ connecting $T$ and $\partial N(U)$. 
Then, by~\cite[Theorem 1]{Wu1992}, 
$T$ survives incompressible with finitely many exceptions after Dehn surgeries on $U$. 
Since $(-1/(m-1))$-surgery on $U$ gives the exterior of the knot $J_{m,n}$, 
which is atoroidal for $m \ne 0$ and $n\ge2$, 
the torus $T$ must be boundary-parallel in $E(J_{m,n})$ 
after all but finitely many Dehn surgeries on $U$. 
This implies that $T$ is separating in $E(J_{1,n})$, 
and $U$ is contained in the component of the complement of $T$ containing $\partial E(J_{1,n})$. 
Now cut $E(J_{1,n})$ along $T$ to obtain a 3-manifold $N$ 
having two torus boundaries which correspond to $\partial N(J_{1,n})$ and $T$. 
For brevity, we also denote the two boundaries of $N$ by the same symbol $\partial N(J_{1,n})$ and $T$ respectively. 
These boundary tori are all incompressible in $N$ since they are all incompressible in $E(J_{1,n})$. 
Now, by the assumption of the case, 
there is an annulus embedded in $N$ connecting $\partial N(J_{1,n})$ and $T$. 
Then by \cite[Theorem 2.4.3 (b)]{CGLS1987}, after some Dehn filling on $\partial N(J_{1,n})$, 
i.e., attaching a solid torus along $\partial N(J_{1,n})$, 
the boundary $T$ must be incompressible in the resultant manifold, say $N'$. 
On the other hand, by the assumption, 
$T$ must become parallel to $\partial N(J_{1,n})$ after 
all but finitely many Dehn surgeries on $U$. 
Thus $T$ must become compressible 
in the manifolds obtained from $N'$ 
by all but finitely many Dehn surgeries on $U$. 
It then follows from \cite[Theorem 1]{Wu1992} again that 
there exists an incompressible annulus in $N \subset E(L_n)$ 
which connects $T$ and $\partial N(U)$. 
This contradicts the assumption. 

Furthermore since $T$ becomes inessential after any $(-1/(m-1))$-surgery for $m \ne 0 \in \mathbb{Z}$, the boundary slope of $A_T$ on $\partial N(U)$ has distance one from any slope represented by $-1/(m-1)$~\cite[Theorem 2.4.3]{CGLS1987}. 
This means that the boundary slope of $A_T$ on $\partial N(U)$ must be $0/1$, namely, longitudinal.

Let $c_A$ be the loop 
$A_T \cap T$. 
If $c_A$ is trivial on $T$, then the disk bounded by $c_A$ with $A_T$ gives 
an essential disk in $E(L_n)$, contradicting to Subclaim~\ref{subclm:2}. 
Hence $c_A$ is non-trivial on $T$. 
Now we further isotope $T$ so that 
the intersection $T \cap P$ has the minimal number of components. 

Let us consider the intersection $c_A \cap P$. 
We may assume that $c_A$ and $P$ intersect transversely. 
Furthermore we can isotope $A_T$ so that $c_A \cap P = \emptyset$ as follows. 
Assume that $c_A \cap P \ne \emptyset$. 
Since the boundary slope of $A_T$ on $\partial N(U)$ is longitudinal, 
$A_T$ is isotoped near $\partial N(U)$ so that $( A_T \cap \partial N(U) ) \cap ( P \cap \partial N(U) ) = \emptyset$. 
Then there exists an arc $\alpha_A \subset A_T \cap P$ on $P$ such that $\partial \alpha_A \subset T \cap P$. 
Notice that $\alpha_A$ must be parallel to a sub-arc in $T \cap P$. 
For otherwise, 
$T$ is isotoped so that the number of components $T \cap P$ is reduced
by using the bigon on $A_T$ bounded by $\alpha_A$ 
with a sub-arc on $\partial A_T$; see Figure~\ref{fig:annulus}. 
This contradicts the assumption. 
Then, by the incompressibility of $T$ and Subclaim~\ref{subclm:1}, 
we can isotope $A_T$ so that $\alpha_A$ vanishes. 
Repeating this process, 
$A_T$ is isotoped so that $c_A \cap P = \emptyset$. 

Since $c_A$ is non-trivial and $c_A \cap P = \emptyset$, together with the assumption that $T \cap P$ consists of loops of type $(a)$ only, 
$c_A$ is parallel to the meridian of $J_{1,n}$. 

Let us consider the intersection $A_T \cap P$. 
If $A_T \cap P = \emptyset$, then as in Case (i), 
we would find a $2$-sphere embedded in $S^2 \times S^1$, 
which intersects $J_{1,n}$ at just three points. 
This contradicts that $J_{1,n}$ is the closed $(2n+1)$-braid on $S^2$ and $n \ge 2$. 

Now we consider the case where $A_T \cap P \ne \emptyset$. 
Then $A_T \cap P$ consists of loops on $P$ since all arc components vanish by the above argument. 
Since $A_T$ and $P$ are both incompressible in $E(L_n)$, 
together with Subclaim~\ref{subclm:1}, the loops in $A_T \cap P$ are essential on both $A_T$ and $P$. 
This implies that all of the loops in $A_T \cap P$ are parallel on $A_T$. 
If the loop in $A_T \cap P$ nearest to $c_A$ on $A_T$ is of type $(b)$ on $P$, 
then, as in the case where $A_T \cap P = \emptyset$, we have a contradiction. 
If the loop in $A_T \cap P$ nearest to $\partial A_T \cap \partial N(U)$ is of type $(a)$ on $P$, then, we also have a contradiction in the same way. 
Otherwise, there must be a pair of loops in $A_T \cap P$ cobounding a sub-annulus on $A_T$, one of which is of type $(a)$ on $P$ and the other is of type $(b)$ on $P$. 
Again, in this case, we have a contradiction in the same way. 

Now we complete the proof of Subclaim~\ref{subclm:3}.  
\end{proof}

\begin{figure}[htb]
\includegraphics[width=0.7\textwidth]{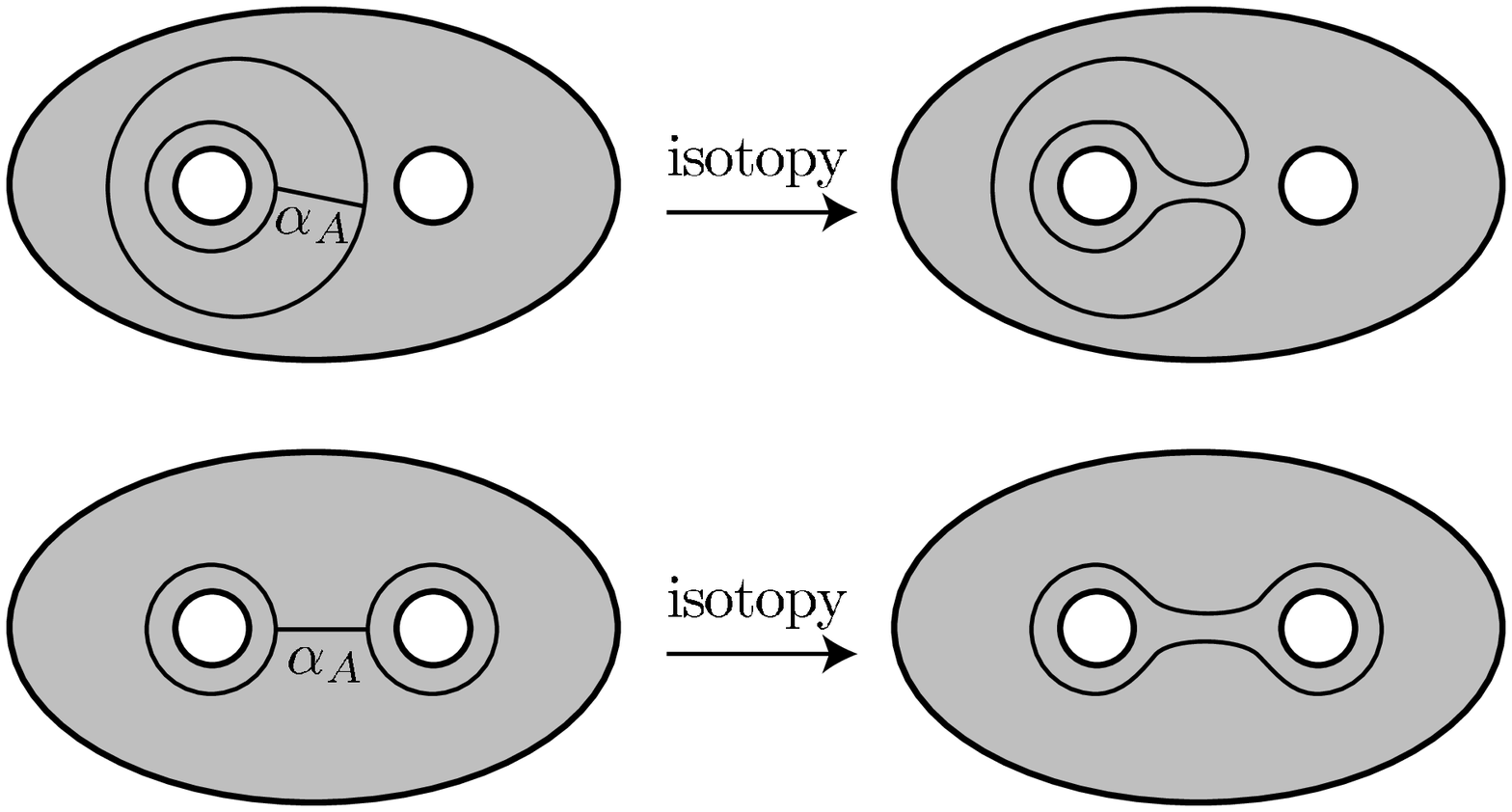}
\caption{}\label{fig:annulus}
\end{figure}

\begin{subclaim}\label{subclm:4}
The exterior $E(L_n)$ is not Seifert fibered. 
\end{subclaim}
\begin{proof}
If $E(L_n)$ is Seifert fibered, then no Dehn surgery on $U$ gives a hyperbolic manifold. 
However, the trivial surgery on $U$ yields the exterior of $J_{1,n}$, which is hyperbolic. 
\end{proof}

By Subclaims~\ref{subclm:1}--\ref{subclm:4}, we complete the proof of Claim~\ref{clm:Ln-hyp}. 
\end{proof}

Consequently, by Claims~\ref{clm:n=1}, \ref{clm:m=1}--\ref{clm:Ln-hyp}, we complete the proof of Lemma~\ref{lem:pretzel}. 
\end{proof}

\section{On non-pretzel Montesinos knots}\label{sec:Monte}

In this section, we show the following:

\begin{lemma}
For the Montesinos knot $K_\pm=M(-1/2, 1/3, 1/(6 \pm 1/2) )$, 
the surgered manifold $K_\pm(0)$ is not Seifert fibered. 
\end{lemma}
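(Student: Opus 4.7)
The plan is to apply the same strategy as in the proof of Lemma~\ref{lem:pretzel}, adapted to the two specific knots $K_\pm = M(-1/2,1/3,1/(6\pm 1/2))$. Each $K_\pm$ is a Montesinos knot of length three, hence strongly invertible; fix such an involution $\iota$ with axis $\alpha$. By Remark~\ref{rem:fibered} the knot $K_\pm$ is fibered, and by Wu's classification~\cite{Wu2006} the $0$-surgery $K_\pm(0)$ contains an essential closed surface. This surface can be taken invariant under the natural extension $\widehat{\iota}$ of $\iota$ to $K_\pm(0)$; denote by $\widehat{F}$ a corresponding $\widehat{\iota}$-invariant closed surface in $K_\pm(0)$, obtained by capping off a $\iota$-invariant spanning surface of $K_\pm$ by a meridian disk of the surgery solid torus, whose regular neighborhood boundary $\partial N(\widehat{F})$ is incompressible.

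Assume for contradiction that $K_\pm(0)$ is Seifert fibered. Then, exactly as in Claim~\ref{clm:braid}, the essential torus $\partial N(\widehat{F})$ is isotopic to a saturated torus in some Seifert fibration of $K_\pm(0)$ by \cite[VI.34.Theorem]{JacoBook}, so the complement $K_\pm(0) - N^\circ(\widehat{F})$ inherits a Seifert fibration. Passing to the quotient $K_\pm(0)/\widehat{\iota} \cong S^3$ and deleting the image of $N(\widehat{F})$ produces a two-string tangle $T_\pm$ in a $3$-ball whose double branched cover is $K_\pm(0) - N^\circ(\widehat{F})$. After a sequence of tangle isotopies parallel to the ones carried out in Figures~\ref{fig:remove}--\ref{fig:J}, this double branched cover is identified with the exterior in $S^2 \times S^1$ of a closed braid $\widehat{\beta_\pm}$, where $\beta_\pm \in \mathbf{B}_{l_\pm}(S^2)$ is to be computed explicitly from the diagram.

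The final step is to invoke Kohn's theorem~\cite[Lemma 4]{Kohn1991}: if the exterior of a knot in $S^2 \times S^1$ admits a Seifert fibration, then the knot is a torus knot in $S^2 \times S^1$, and hence the closure of a torus braid whose exponent sum is congruent to $0$ or $l_\pm - 1$ modulo $2(l_\pm - 1)$. By Lemma~\ref{lem:exp}, the exponent sum of $\beta_\pm$ is itself invariant modulo $2(l_\pm - 1)$, so it suffices to verify that the exponent sum of the specific braid $\beta_\pm$ read off from the final diagram fails this congruence, producing the desired contradiction.

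The main obstacle is the explicit Montesinos-trick computation that produces $\beta_\pm$: the rational tangle $1/(6 \pm 1/2) = 2/(12 \pm 1)$ in $K_\pm$ is more intricate than the integer tangles $2m$, $\pm(2n+1)$ of the pretzel case, so the tangle manipulations translating the branch set into a closed braid picture, and the resulting braid word, require more careful bookkeeping even though the method is structurally identical. Once the correct $\beta_\pm$ and its strand number $l_\pm$ are identified, verifying the incompatibility of its exponent sum with the torus-braid congruence is an immediate arithmetic check.
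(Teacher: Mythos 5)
Your proposal is a plan rather than a proof, and the plan has two genuine gaps. First, the pretzel argument you are importing rests on a specific structural fact: the $\iota$-invariant spanning surface $F$ of $P(2m,2n+1,-2n-1)$ is non-orientable and caps off to a Klein bottle $\widehat{F}$, so that $N(\widehat{F})$ is a twisted $I$-bundle and the complement $K_{m,n}(0)-N^\circ(\widehat{F})$ turns out, after the tangle manipulations, to be the exterior of a \emph{knot in $S^2\times S^1$}. Only because of that do Kohn's lemma and the exponent-sum test in $\mathbf{B}_l(S^2)$ apply. For $K_\pm$ you assert the existence of an invariant surface capping off to a closed surface whose neighborhood boundary is the essential torus, but you neither identify that surface nor verify that the complementary piece is a knot exterior in $S^2\times S^1$; there is no a priori reason it should be. Second, even granting all of that, your conclusion hinges on an uncomputed braid word $\beta_\pm$ failing a congruence, and the pretzel case itself shows this test can be inconclusive: for $|m|\ge 2$ the exponent sum of $\beta_{m,n}$ gives no contradiction and the paper must switch to a completely different argument (annular Dehn fillings on the augmented link $L_n$, via Gordon--Wu). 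You offer no fallback if the congruence happens to be satisfied for $\beta_\pm$, so as written the argument does not establish the statement.

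For comparison, the paper does not use the Montesinos trick here at all. For $K_+$ it uses that $K_+(0)$ is a surface bundle whose monodromy would have to be periodic if the manifold were Seifert fibered; periodicity forces the characteristic polynomial of the induced map on $H_1$ of the fiber --- which equals the Alexander polynomial $\Delta_{K_+}(t)=-(t^2+t^{-2})+3$ --- to have a root of unity as a zero, and it has none. This fails for $K_-$ because $\Delta_{K_-}(t)=t^2+t^{-2}-1$ is cyclotomic, so there the paper instead uses a Kirby-calculus computation exhibiting $K_-(0)$ as a genus-two surface bundle with explicit monodromy; periodicity would make the associated spherical braid $\sigma_5^3\sigma_3\sigma_1^{-1}\sigma_4\sigma_2^{-1}\in\mathbf{B}_6(S^2)$ conjugate to a power of $\sigma_1\cdots\sigma_5$, which is ruled out by the exponent sum modulo $10$. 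So an exponent-sum argument does appear in the paper's proof, but it is applied to the monodromy braid of the fibration, not to a branch locus produced by a strong inversion.
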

\begin{proof}
We divide our argument into two claims.

\begin{claim}\label{clm:Monte+}
The surgered manifold $K_+(0)$ is not Seifert fibered. 
\end{claim}
\begin{proof}
As mentioned in Remark~\ref{rem:fibered}, the knot $K_+$ is fibered. 
Let $F$ be a fiber surface for $E(K_+)$, and 
$h : F \rightarrow F$ a monodromy homeomorphism with respect to the fibration of $E(K_+)$, that is, $E(K_+) \cong F \times [ 0,1 ] / (x,1) \sim (h(x),0)$. 
By the natural extension, the surgered manifold $K_+(0)$ is regarded as 
a surface bundle with the monodromy map $\hat{h}: \hat{F} \rightarrow \hat{F}$, 
where $\hat{F}$ denotes the closed surface obtained from $F$ by capping off. 
Consider the induced isomorphism $\hat{h}_* : H_1(\hat{F}) \rightarrow H_1(\hat{F})$ 
from the monodromy $\hat{h}$. 
Notice that the characteristic polynomial of $\hat{h}_*$ 
coincides with the Alexander polynomial of $K_+$ 
(see for example~\cite[p.~326, Corollary 8]{RolfsenBook}). 
On the other hand, 
if the characteristic polynomial of $\hat{h}_*$ has no roots of unity as zeros, 
then the  homeomorphism $h$ is not periodic~\cite[Lemma 5.1]{CassonBleiler1988}. 
These imply that 
if $\Delta_{K_+}(t)$ has no root of unity as zeros, 
then $K_+(0)$ is not Seifert fibered. 
Actually, we have $\Delta_{K_+}(t) = -(t^2+t^{-2}) + 3$, 
and then $\Delta_{K_+}(t)$ has no roots of unity as zeros. 
This completes the proof of Claim~\ref{clm:Monte+}. 
\end{proof}

In the remaining part, we will consider Dehn surgeries on links in $S^3$, 
equivalently, surgery descriptions for $3$-manifolds. 
We omit the details. 
See \cite[Chapter 9]{RolfsenBook} for example.

\begin{claim}\label{clm:Monte-}
The surgered manifold $K_-(0)$ is not Seifert fibered. 
\end{claim}

\begin{proof}
As illustrated in Figure~\ref{fig:Kirbymove}, the knot $K_-$ with the surgery coefficient $0$ is 
transformed into the link $L$ with the surgery coefficients $\mathbf{r} =(0,1,1,-1,-1,-1/3)$ by the Kirby moves. 

Then, as seen in \cite[section 3]{Ichihara1998}, 
the $3$-manifold $L(\mathbf{r})$ obtained by the Dehn surgery 
on $L$ with surgery coefficients $\mathbf{r}$ is homeomorphic to 
a surface bundle $M_\varphi$ over the circle with genus two surface fibers, 
and its monodromy map $\varphi$ is described as 
$$\varphi = \tau_5^3 \circ \tau_3 \circ \tau_1^{-1} \circ \tau_4 \circ \tau_2^{-1} \ .$$
Here $\tau_i$ ($1 \le i \le 5$) denotes the Dehn twist on the genus two surface 
along the simple closed curve $c_i$ depicted in Figure~\ref{fig:LickorishGenerators}. 

Furthermore, 
since $\varphi$ commutes with the hyper-elliptic involution, 
the manifold $L(\mathbf{r})$ is also obtained 
as the double branched covering space of $S^2 \times S^1$ branched along the $6$-string closed braid $\hat{\beta}$ in $S^2 \times S^1$ with 
$$\beta = \sigma_5^3 \sigma_3 \sigma_1^{-1} \sigma_4 \sigma_2^{-1} \in \mathbf{B}_6 (S^2). $$

Now, assume for a contradiction that $K_-(0) \cong M_\varphi$ is Seifert fibered. 
Then, since the fiber surface is of genus two, 
by \cite[VI. 31. Lemma]{JacoBook}, the monodromy map $\varphi$ must be periodic. 
This implies that the braid $\beta$ is of finite order in $\mathbf{B}_6 (S^2)$. 
Then, by \cite[Theorem 4.2]{Murasugi1982}, 
$\beta$ is conjugate to $\Delta^k$ for some integer $k$, where 
$\Delta$ denotes $\sigma_1 \sigma_2 \sigma_3 \sigma_4 \sigma_5$. 
This implies that the closed braid for $\beta$ is isotopic to that for $\Delta^k$. 
%
However, the exponent sum of $\beta$ is 3, and that of $\Delta^m$ is $5m$.
Since $3 \not\equiv 5m \mod 10$, this contradicts Lemma~\ref{lem:exp}. 
This completes the proof of Claim~\ref{clm:Monte-}. 
\end{proof}
\begin{figure}[htb]
\includegraphics[width=0.6\textwidth]{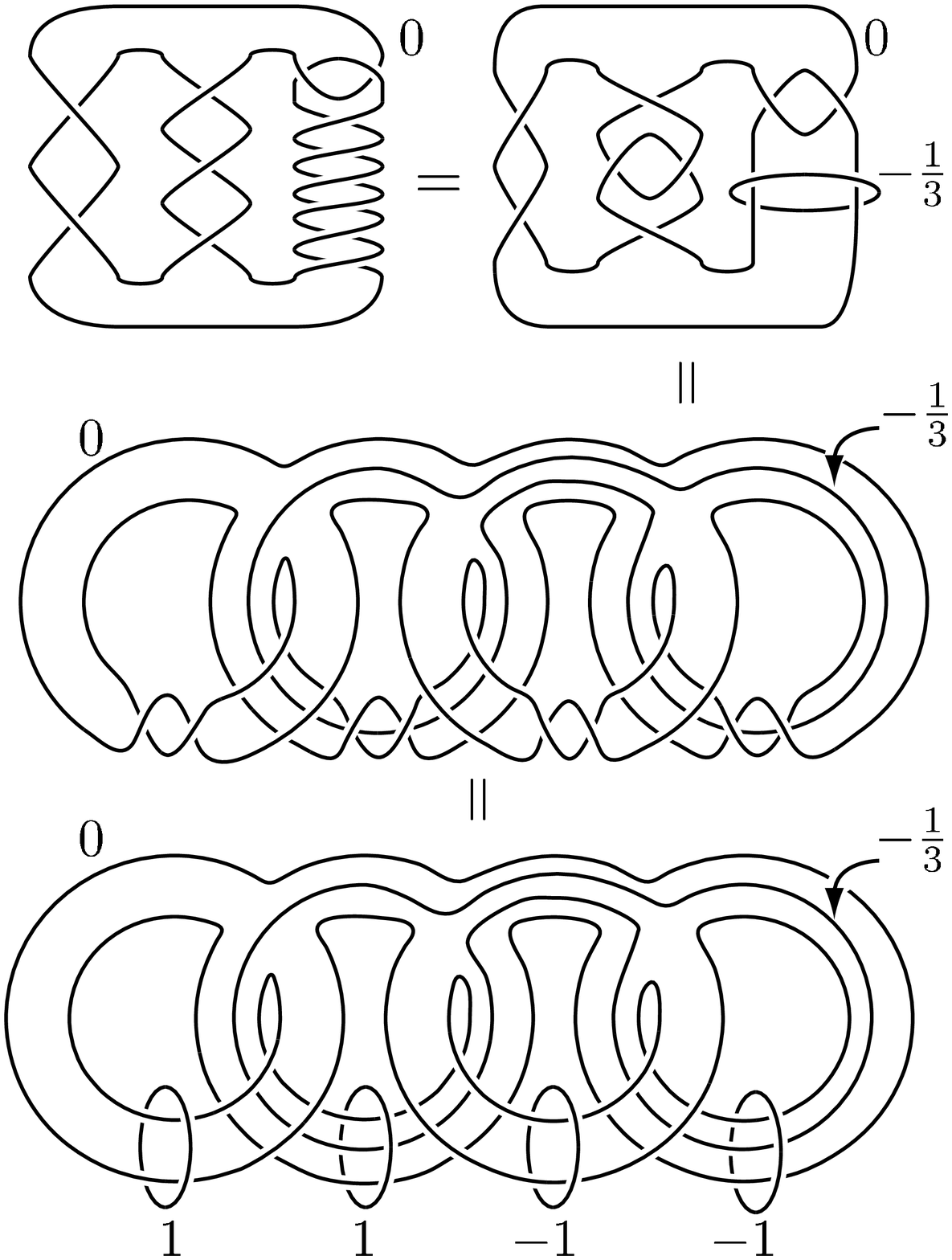}
\caption{}\label{fig:Kirbymove}
\end{figure}
\begin{figure}[htb]
\includegraphics[width=0.3\textwidth]{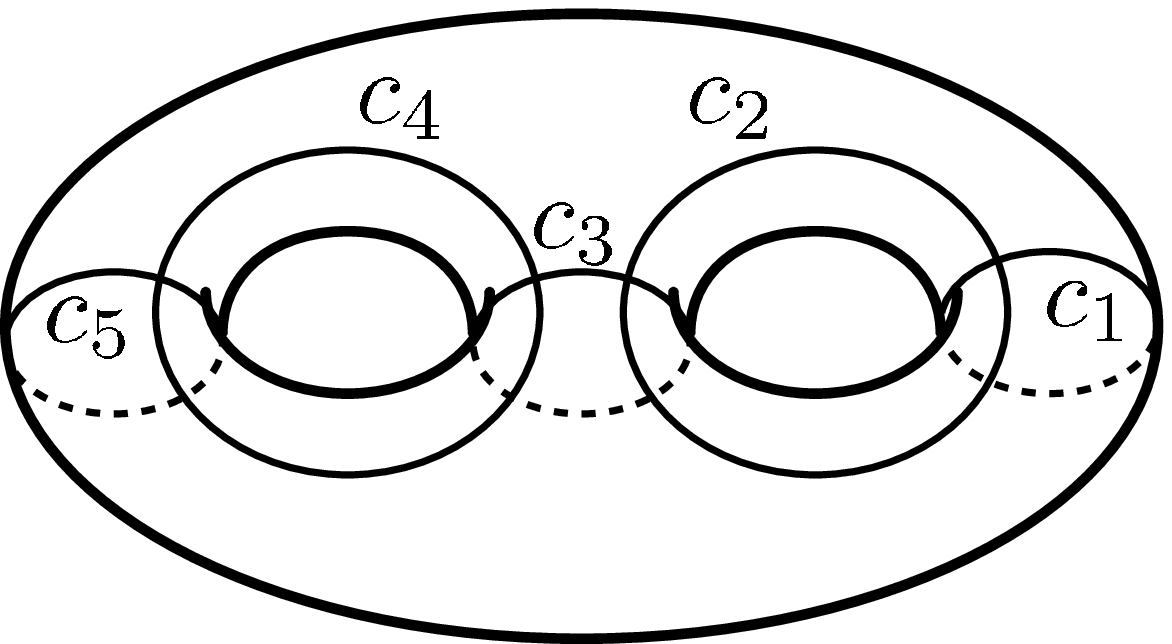}
\caption{}\label{fig:LickorishGenerators}
\end{figure}

By Claims~\ref{clm:Monte+} and \ref{clm:Monte-}, we complete the proof of Lemma~\ref{lem:non-pretzel}.
\end{proof}

\begin{remark}
Unfortunately the argument in the proof of Claim~\ref{clm:Monte+} 
(resp. Claim~\ref{clm:Monte-}) 
does not work for proving Claim~\ref{clm:Monte-} (resp. Claim~\ref{clm:Monte+}). 
Thus we divide the proof into two cases 
according to their signs, and apply different methods. 
\end{remark}

\subsection*{Acknowledgments}
The authors would like to thank Professor Makoto Sakuma for 
letting them know the result obtained by Professor Kunio Murasugi in \cite{Murasugi1982}. 
They also thank the referees for their suggestions and comments, 
in particular, on improving the proofs of Claims~\ref{clm:n=1} and Subclaim~\ref{subclm:3}.

\providecommand{\bysame}{\leavevmode\hbox to3em{\hrulefill}\thinspace}
\providecommand{\MR}{\relax\ifhmode\unskip\space\fi MR }
\providecommand{\MRhref}[2]{%
  \href{http://www.ams.org/mathscinet-getitem?mr=#1}{#2}
}
\providecommand{\href}[2]{#2}


\end{document}